\newtheorem{theorem}{Theorem}[section]
\newtheorem{thm}[theorem]{Theorem}
\newtheorem{cor}[theorem]{Corollary}
\newtheorem{lem}[theorem]{Lemma}
\theoremstyle{definition}
\newtheorem*{ack}{Acknowledgement}
\theoremstyle{remark}
\newtheorem{remark}[theorem]{Remark}
\newtheorem{claim}[theorem]{Claim}
\newif\ifcnote\cnotefalse
\newcommand{\bbN}{\mathbb{N}}
\newcommand{\bbR}{\mathbb{R}}
\newcommand{\clC}{\mathcal{C}}
\newcommand{\clH}{\mathcal{H}}
\newcommand{\diam}{\mathrm{diam}}
\newcommand{\dist}{\mathrm{dist}}
\def\R{\mathbb{R}}
\DeclareMathAlphabet{\pazocal}{OMS}{zplm}{m}{n}
\DeclareMathAlphabet{\pazocal}{OMS}{zplm}{m}{n}
\newtheorem*{thm*}{Theorem}
\DeclarePairedDelimiter\ceil{\lceil}{\rceil}
\definecolor{darkgrn}{rgb}{0, 0.75, 0}
\definecolor{eaclr}{rgb}{0.2, 0.7, 0}
\title[Asymptotics of maximum distance minimizers]
    {Asymptotics of maximum distance minimizers}
    \author[Enrique G Alvarado]{Enrique G Alvarado}
    \author[Louisa Catalano]{Louisa Catalano}
    \author[Tom\'as Merch\'an]{Tom\'as Merch\'an}
    \author[Lisa Naples]{Lisa Naples}
    \address{Department of Mathematics, University of California at Davis, Davis, CA 95616, USA}
    \email{ealvarado@math.ucdavis.edu}
    \address{Department of Mathematics, Clayton State University, Morrow, GA 30260, USA}
    \email{LouisaCatalano@clayton.edu}
    \address{Department of Mathematics, Clayton State University, Morrow, GA    30260, USA}
    \email{TomasMerchanRodriguez@clayton.edu}
    \address{Department of Mathematics, Fairfield University, Fairfield, CT 06824, USA}
    \email{lisa.naples@fairfield.edu}
\keywords{Maximum Distance Problem, H\"older curves, fractals}
\subjclass[2020]{49Q20}
\begin{document}

\begin{abstract}
We study the limiting behavior of $r$-maximum distance minimizers and the asymptotics of their $1$-dimensional Hausdorff measures as $r$ tends to zero in several contexts, including situations involving objects of fractal nature. 
\end{abstract}

\maketitle


\section{Introduction}\label{sec: intro}
The Maximum Distance Problem (MDP) asks to find the shortest curve whose $r$-neighborhood contains a given set. 
In particular, given some compact subset $E$ of $\bbR^n$ and some radius $r > 0$, the {\it Maximum Distance Problem} is defined as follows:
\begin{equation}\label{eq: MDP}
\begin{cases}
    \text{minimize } \clH^1(\Gamma) \\
    \text{among rectifiable curves } \Gamma\subset \bbR^n \text{ such that } B(\Gamma, r) \supset E,
    \end{cases}
\end{equation}
where $B(A, r) := \{x \in \bbR^n : \mathrm{dist}(x, A) \leq r\}$ denotes the {\it closed $r$-neighborhood} of any subset $A$ of $\bbR^n$, and $\dist(x, A) := \inf_{y \in A}{|x - y|}$.
We let $\Lambda(E, r)$ be the infimum of that problem: 
\[
\Lambda(E, r) := \inf\{\clH^1(\Gamma) : \Gamma\subset \bbR^n \text{ is a rectifiable curve and } B(\Gamma, r) \supset E\}.
\]
In this paper, we will denote minimizers of~(\ref{eq: MDP}) by $\Gamma^\ast_r$, i.e., $\Gamma^\ast_r$ is a rectifiable curve such that $B(\Gamma^\ast_r, r) \supset E$ and $\clH^1(\Gamma^\ast_r) = \Lambda(E, r)$, and we will call $\Gamma^\ast_r$ an {\it r-maximum distance minimizer of $E$}. 

The MDP arises from the $L^\infty$ version of the \emph{average distance problem}, which was introduced in the seminal work~\cite{buttazzo2002optimal} of Buttazzo, Oudet, and Stepanov as an attempt to optimize mass transportation in cities. 
Given a population density modeled with a measure $\mu$ with bounded support, and given a maximum transportation network cost $l > 0$, the $p$-average distance problem minimizes the functional 
\begin{equation}\label{average distance problem}
 F_{\mu, l}(\Sigma) := \left(\int\dist(x, \Sigma )^p\, d\mu(x) \right)^{1/p}
\end{equation}
over all rectifiable curves $\Sigma$ such that $\mathcal{H}^1(\Sigma) \leq l$. 
Letting $p \to +\infty$, the $p$-average distance problem reduces to minimizing 
\begin{equation}\label{mdp dual}
F_E(\Sigma) := \max_{y \in E} \mathrm{dist}(y, \Sigma)
\end{equation}
over the same class, where $E$ is now the support of $\mu$. 
Minimizing (\ref{mdp dual}) can be seen as the ``dual problem'' of the MDP (as defined in (\ref{eq: MDP})). 
In fact, Miranda Jr., Paolini, and Stepanov showed that the minimizers of $F_E$ are equivalent to the minimizers of the MDP \cite{miranda2006one, paolini2004qualitative}.\footnote{In this paper, due to the equivalence of the minimizers we reverse Paolini and Stepanov's terminology for the ``Maximum Distance Problem'' and its ``dual''. 
In particular, our ``Maximum Distance Problem'' is what they refer to as the ``dual to the maximum distance minimizing problem''.}
These two papers, \cite{buttazzo2002optimal} and \cite{paolini2004qualitative}, were the foundation for an extensive amount of work on the average and maximum distance problems. 
The existence of rectifiable minimizers for both problems follow from standard compactness arguments of geometric measure theory, and a lot of work has been put into the study of their regularity and structure~\cite{alvarado2021maximum, buttazzo2002optimal, gordeev2022regularity, paolini2004qualitative, santambrogio2005blow, teplitskaya2021regularity}.  

The problem of studying the asymptotics of average distance minimizers (as $l \to +\infty$) for $p = 1$ was raised by Buttazzo, Oudet, and Stepanov \cite[Problem~3.6]{buttazzo2002optimal}. 
They provided a partial answer to their problem for the case that $p = 1$ and $\mu \ll \mathcal{L}^n$ ($\mu$ is absolutely continuous with respect to the Lebesgue measure $\mathcal{L}^n$) by showing that $\min_\Sigma F_{\mu, l}(\Sigma)$ is $O(l^{-1/(n-1)})$ as $l \to +\infty$. 
In 2005, Mosconi and Tilli \cite{MT2005} solved the problem for any $1 \leq p < +\infty$ and any $\mu \ll \mathcal{L}^n$. 

In this paper, we study the asymptotics for the case $p=+\infty$, and achieve interesting results without the absolute continuity assumption, which involve objects of fractal nature.

For the remainder of the introduction, we proceed to briefly outline the main results of the paper. 
We begin by showing in the theorem below that whenever $E$ is contained in a rectifiable curve, \emph{solutions to the Analyst's Traveling Salesman Problem} can be obtained as the limit (under Hausdorff convergence) of $r$-maximum distance minimizers as $r \to 0^+$. We stress that while the Analyst's Traveling Salesman Problem is not classically viewed as a minimization problem, we use the expression ``solutions to the Analyst's Traveling Salesman Problem'' to mean curves of least possible $\clH^1$ measure.
{
\renewcommand{\thetheorem}{\ref{thm: neigh MDP to ATSP}}
\begin{thm}
Suppose $E$ is contained in a rectifiable curve and let $r_j \to 0^+$. 
There exists a sequence $\{\Gamma^\ast_{r_j}\}$ of $r_j$-maximum distance minimizers of $B(E, r_j)$ and a rectifiable curve $\Gamma^\ast_0 \supset E$ such that 
\begin{enumerate}
\item[(a)] $\displaystyle\lim_{j\to \infty} d_H(\Gamma^\ast_{r_j},  \Gamma^\ast_0) = 0$ and
\item[(b)] $ \displaystyle\lim_{j\to\infty}\clH^1(\Gamma_{r_j}^\ast)  =\clH^1(\Gamma^\ast_0) = \inf\{\clH^1(\Gamma) : \Gamma \text{ is a rectifiable curve, } \Gamma \supset E\}$.
\end{enumerate}
\end{thm}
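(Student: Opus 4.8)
The plan is to sandwich the minimal length $\Lambda(B(E,r_j),r_j)$ between the Analyst's Traveling Salesman value
$L := \inf\{\clH^1(\Gamma) : \Gamma \text{ a rectifiable curve}, \ \Gamma \supset E\}$
from both sides, and then to identify a Hausdorff limit of minimizers as a curve realizing $L$. Since $E$ lies in a rectifiable curve, $L<\infty$. For the upper bound I would note that any rectifiable curve $\Gamma \supset E$ satisfies $B(\Gamma,r)\supset B(E,r)$, so $\Gamma$ is admissible for the $r$-MDP of $B(E,r)$; choosing $\Gamma$ with $\clH^1(\Gamma)$ close to $L$ yields $\Lambda(B(E,r),r)\le L$ for every $r>0$.

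Next, invoking existence of minimizers (standard compactness, cf.\ \cite{paolini2004qualitative}), pick $\Gamma^\ast_{r_j}$ with $\clH^1(\Gamma^\ast_{r_j})=\Lambda(B(E,r_j),r_j)\le L$. Set $R:=\sup_j r_j<\infty$ and $K:=\overline{\mathrm{conv}}\,B(E,R)$. Replacing each $\Gamma^\ast_{r_j}$ by its image under the nearest-point projection $\pi_K$ onto $K$ — a $1$-Lipschitz map fixing $B(E,r_j)\subset K$, so that $\dist(x,\pi_K(\Gamma^\ast_{r_j}))\le\dist(x,\Gamma^\ast_{r_j})\le r_j$ for $x\in B(E,r_j)$ — produces a competing rectifiable curve of no greater length, hence again a minimizer, now contained in $K$. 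I may therefore assume all $\Gamma^\ast_{r_j}\subset K$. These are uniformly bounded continua of uniformly bounded length, so by the Blaschke selection theorem a subsequence converges in $d_H$ to a compact set $\Gamma^\ast_0\subset K$; relabel.

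I would then verify the three required properties of $\Gamma^\ast_0$. First, $E\subset\Gamma^\ast_0$: for $x\in E$ we have $x\in B(E,r_j)\subset B(\Gamma^\ast_{r_j},r_j)$, so $\dist(x,\Gamma^\ast_{r_j})\le r_j\to0$, and Hausdorff convergence forces $x\in\Gamma^\ast_0$. Second, by Golab's lower semicontinuity theorem the Hausdorff limit of connected sets of uniformly bounded length is a continuum with $\clH^1(\Gamma^\ast_0)\le\liminf_j\clH^1(\Gamma^\ast_{r_j})\le L<\infty$; a continuum of finite $\clH^1$ measure is arcwise connected and rectifiable, so $\Gamma^\ast_0$ is a rectifiable curve containing $E$, whence $\clH^1(\Gamma^\ast_0)\ge L$. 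Combining gives $\clH^1(\Gamma^\ast_0)=L$ and $\lim_j\clH^1(\Gamma^\ast_{r_j})=L$, which is (a)--(b) along the subsequence, and the fact that $\Gamma^\ast_0$ realizes $L$ while containing $E$ is exactly the asserted ATSP conclusion. Finally, since this argument applies to every subsequence while $\Lambda(B(E,r),r)\le L$ always holds, a routine subsequence argument upgrades $\lim_j\Lambda(B(E,r_j),r_j)=L$ to the full sequence in (b).

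I expect the crux to be the lower bound, i.e.\ the passage from a neighborhood-covering condition to a genuine curve through $E$: one must ensure the Hausdorff limit is not merely a compact set but a rectifiable curve, so that it is admissible for the ATSP and its length is bounded below by $L$. This is precisely where Golab's theorem (preservation of connectedness together with lower semicontinuity of $\clH^1$) and the structure theory of finite-length continua are indispensable; by contrast, the upper bound and the compactness/projection step are comparatively routine.
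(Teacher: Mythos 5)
Your proposal is correct and follows essentially the same route as the paper's proof: bound $\Lambda(B(E,r),r)$ above by the ATSP value, project minimizers into a fixed compact convex set, extract a Hausdorff-convergent subsequence via Blaschke, show $E\subset\Gamma^\ast_0$, and apply Golab's theorem together with the fact that a finite-length continuum is a rectifiable curve. The only (immaterial) differences are that you close the lower bound by noting $\Gamma^\ast_0\supset E$ is itself an ATSP competitor, whereas the paper notes $\Gamma^\ast_0$ is an admissible MDP competitor for every $r_i$; and, like the paper, you obtain conclusion (a) only along a subsequence, upgrading to the full sequence only the length statement (b).
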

}
The expression $d_H(\cdot,\cdot)$ refers to the Hausdorff distance, see Section \ref{sec: MDP_ATP} for the definition.

We are also interested in studying how minimizers from the Maximum Distance Problem, which are defined as 1-dimensional sets, behave when the set to approximate has some rough geometry. 
The natural context to study such interaction is in the setting of H\"older curves (see Section \ref{sec: MDP Holder} for a precise definition of H\"older curves). 
In the theorem below, we achieve precise estimates which quantify these relationships. 

{
\renewcommand{\thetheorem}{\ref{thm: general case}}
\begin{thm}
Let $0 < \alpha \leq \beta$ and let $\gamma:[0,1]\to \bbR^n$ be a \textit{weak} $(\alpha,\beta)$-bi-H\"older curve with constant $1\leq C_\gamma < \infty$. 
Then there exists $C=C(\alpha, \beta, C_\gamma,n)$ such that 
\[
\frac{1}{C} r^{\frac{\beta-1}{\beta}} \leq \Lambda(B(\widehat\gamma,r), r) \leq C r^{\frac{\alpha-1}{\alpha}},
\]
for all small enough $r = r(\beta,C_\gamma) > 0$. 
\end{thm}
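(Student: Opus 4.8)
The plan is to prove the two bounds separately, exploiting the (weak) bi-Hölder structure to transfer metric information between the parameter interval $[0,1]$ and the image $\widehat\gamma$. Recall that the defining inequalities of a weak $(\alpha,\beta)$-bi-Hölder curve should read, up to the constant $C_\gamma$, as
\[
\tfrac{1}{C_\gamma}|s-t|^{\beta}\le |\gamma(s)-\gamma(t)|\le C_\gamma|s-t|^{\alpha},
\]
so larger exponents correspond to ``rougher'' behavior; the thickened set $B(\widehat\gamma,r)$ is what we are forced to cover, and $\Lambda(B(\widehat\gamma,r),r)$ asks for the cheapest curve whose $r$-neighborhood swallows it.

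\emph{Upper bound.} For the estimate $\Lambda(B(\widehat\gamma,r),r)\le C r^{(\alpha-1)/\alpha}$ I would simply exhibit a competitor. Partition $[0,1]$ into $N\approx r^{-1/\alpha}$ subintervals of equal length and take $\Gamma$ to be the polygonal curve through the sample points $\gamma(k/N)$. The upper Hölder bound gives that consecutive sample points are within $C_\gamma N^{-\alpha}\lesssim r$ of each other, and more importantly that each arc $\gamma([k/N,(k+1)/N])$ stays within distance $\lesssim r$ of the corresponding segment; hence $\widehat\gamma\subset B(\Gamma,Cr)$ and, after rescaling $r$ by a fixed constant, $B(\widehat\gamma,r)\subset B(\Gamma,Cr)$. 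The length of $\Gamma$ is at most $N\cdot C_\gamma N^{-\alpha}=C_\gamma N^{1-\alpha}\approx r^{(\alpha-1)/\alpha}$, which is exactly the claimed order. The only care needed is to replace $r$ by $r/C'$ in the construction so that the thickened target is genuinely covered; this changes only the constant.

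\emph{Lower bound.} The inequality $\Lambda(B(\widehat\gamma,r),r)\ge \tfrac1C r^{(\beta-1)/\beta}$ is the substantive half, and I expect it to be the main obstacle. Here I would use a packing/separation argument: the lower Hölder bound forces $\widehat\gamma$ to contain many points that are mutually far apart, namely choosing $t_k=k/M$ with $M\approx r^{-1/\beta}$ gives points $\gamma(t_k)$ pairwise separated by at least $\tfrac{1}{C_\gamma}M^{-\beta}\gtrsim r$, so roughly $M\approx r^{-1/\beta}$ points each needing to be within $r$ of the covering curve $\Gamma$. A curve whose $r$-neighborhood meets all of these well-separated points must have length bounded below in terms of their number: by a standard argument (e.g.\ using that an $r$-tube around $\Gamma$ has area $\lesssim r\,\clH^1(\Gamma)+r^2$, while it must contain disjoint $r/2$-balls centered at a separated subfamily of the $\gamma(t_k)$), one gets $\clH^1(\Gamma)\gtrsim r\cdot(\#\text{points})\gtrsim r\cdot r^{-1/\beta}=r^{(\beta-1)/\beta}$. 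The delicate points are (i) choosing the separation scale so the balls around the sample points are genuinely disjoint and each must be ``served'' by $\Gamma$, and (ii) making sure the area/length comparison for a tube around an arbitrary rectifiable curve is applied correctly (one typically projects onto $\Gamma$ or uses a covering-number estimate for $r$-neighborhoods of $1$-rectifiable sets). I would carry out the lower bound via this separated-set counting, and finally note that both constructions only require $r$ small enough depending on $\beta$ and $C_\gamma$ for the sample spacings to lie in the valid range, which yields the stated dependence $r=r(\beta,C_\gamma)$.
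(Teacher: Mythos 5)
The upper bound in your proposal has a genuine gap, and it sits exactly at the point where the Maximum Distance Problem differs from ordinary curve approximation. Your polygonal curve $\Gamma$ through the sample points gives $\widehat\gamma\subset B(\Gamma,Cr)$ and hence $B(\widehat\gamma,r)\subset B(\Gamma,(C+1)r)$, but the quantity $\Lambda(B(\widehat\gamma,r),r)$ requires a competitor with $B(\Gamma,r)\supset B(\widehat\gamma,r)$ for the \emph{same} radius $r$, and no inscribed polygon achieves this, however fine the mesh. Since $B(\widehat\gamma,r)=\bigcup_{y\in\widehat\gamma}B(y,r)$, the competitor must satisfy $B(y,r)\subset B(\Gamma,r)$ for \emph{every} $y\in\widehat\gamma$. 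If near $y$ the relevant part of $\Gamma$ lies in a half-space $\{x:\langle x-y,u\rangle\le-\delta\}$ with $\delta=\dist(y,\Gamma)>0$ (which is what a chord polygon does at the tip of each bump of, say, the von Koch curve), then the point $z=y+ru$ satisfies $|z-x|\ge\langle z-x,u\rangle\ge r+\delta$ for all such $x\in\Gamma$, so $z\in B(y,r)\setminus B(\Gamma,r)$. Your suggested fix of ``replacing $r$ by $r/C'$'' only shrinks $\delta$; it never makes it $0$, because a finite-length curve cannot contain the non-rectifiable set $\widehat\gamma$. What your construction actually bounds is $\Lambda(B(\widehat\gamma,r),Cr)$, which is $\le\Lambda(B(\widehat\gamma,r),r)$ since the functional is non-increasing in the covering radius; that is, you bound the wrong (smaller) quantity. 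The missing idea, which is the crux of the paper's Lemma \ref{lem: upper bound}, is that the competitor must \emph{wrap around} each point of $\widehat\gamma$ rather than merely pass near it: place at each sample point $\gamma(t_i)$ a circle $C_i$ of radius $r/2$, with spacing chosen so that consecutive circles intersect (giving connectedness) and every $y\in\widehat\gamma$ lies within $r/10$ of some center; then any $z$ with $|z-y|\le r$ satisfies $\dist(z,C_{i_y})\le r$ for the circle $C_{i_y}$ whose center is $r/10$-close to $y$, so $B(\widehat\gamma,r)\subset\bigcup_iB(C_i,r)$, and the total length is still $\approx N\cdot r\approx r^{(\alpha-1)/\alpha}$. (In $\bbR^n$ with $n\ge3$ a single circle per point no longer suffices, and one needs $C(n)$ circles in different $2$-planes; this is the paper's Lemma \ref{lem: appendix}.)

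Your lower bound, by contrast, is correct, and it takes a genuinely different and more direct route than the paper's Lemma \ref{lem: lower bound}: there the authors argue by contradiction, pigeonholing the competitor's mass among disjoint $r$-neighborhoods of ``cores'' of parameter intervals and then rescaling back to unit scale to contradict the diameter bound of Lemma \ref{lem:diam estimate}, whereas you take $M\approx(C_\gamma r)^{-1/\beta}$ sample points, observe they are pairwise $\gtrsim r$-separated by the lower H\"older inequality, and note each must lie within $r$ of the competitor. To finish cleanly, choose the spacing so the points are pairwise $\ge 10r$-separated, pick $x_k\in\Gamma$ with $|x_k-\gamma(t_k)|\le r$, and use that the connected component of $\Gamma\cap B(x_k,3r)$ containing $x_k$ must reach $\partial B(x_k,3r)$ (hence has $\clH^1$ measure at least $3r$), while the balls $B(x_k,3r)$ are disjoint; this avoids the tube-volume estimate you sketch, though that variant also works. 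Both routes give the same exponent; yours is shorter and self-contained, and once the constants and the smallness condition $r=r(\beta,C_\gamma)$ are pinned down it would be a legitimate replacement for the paper's lower-bound proof.
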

}

It is worth mentioning that the study of H\"older curves and its analogous concept of rectifiability have attracted plenty of attention lately as a higher dimensional alternative to rectifiable curves. See, for example, \cite{BNV19,BV2019}.

In Theorem \ref{thm: neigh MDP to ATSP}, we already show (using compactness arguments) that in the case in which $E$ can be covered by a rectifiable curve $\Gamma^\ast_0$, there exists a sequence of minimizers that converge in Hausdorff distance to $\Gamma^\ast_0$. In more general situations, compactness arguments are not available and other techniques are required.
In the results below, we succeed in overcoming such obstacles.
{
\renewcommand{\thetheorem}{\ref{thm: Hausdorff conv H1}}
\begin{thm}
     Let $E \subset \R^2$ be path connected. 
     For any $r>0$, we let $\Gamma^\ast_r$ be an $r$-maximum distance minimizer of $B(E, r)$. 
     Then  $$B(\Gamma^\ast_r,r)\subset B(E,Cr),$$
     where $C>1$ is an absolute constant.
\end{thm}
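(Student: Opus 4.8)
The plan is to prove the equivalent assertion that the minimizer itself stays within an absolute multiple of $r$ of $E$: I will show $\Gamma^\ast_r\subset B(E,3r)$, from which $B(\Gamma^\ast_r,r)\subset B(E,4r)$ follows by the triangle inequality, so that $C=4$ is admissible (the constant is not optimized). Throughout write $F:=B(E,r)$ for the set to be covered; since $E$ is path connected, $F$ is connected, and this is the only feature of $E$ I will use. The basic observation is that points of $\Gamma^\ast_r$ lying far from $E$ are useless for the covering constraint $B(\Gamma^\ast_r,r)\supset F$: if $\dist(y,E)>2r$ then for every $p\in F$ one has $|y-p|\ge \dist(y,E)-\dist(p,E)>2r-r=r$, so $y$ covers no point of $F$. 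Hence any portion of $\Gamma^\ast_r$ sitting in the open far region $\{\dist(\cdot,E)>2r\}$ is present only to keep the curve connected, and the whole proof is a device for trading such portions for short bridges.

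Arguing by contradiction, suppose some $y^\ast\in\Gamma^\ast_r$ has $\dist(y^\ast,E)>3r$. After first trimming any end of $\Gamma^\ast_r$ that pokes into the far region back to the level set $\{\dist(\cdot,E)=2r\}$ --- which strictly shortens the curve without losing coverage --- I isolate the maximal subarc $A\subset\Gamma^\ast_r$ through $y^\ast$ contained in $\{\dist(\cdot,E)>2r\}$. Its endpoints $a,b$ lie on $\{\dist(\cdot,E)=2r\}$, and since $A$ climbs from level $2r$ past level $3r$ and returns, its length exceeds $2r$. Deleting the relatively open interior of $A$ leaves a compact set that still covers $F$. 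If this set is connected it is already a strictly shorter competitor and we are done; otherwise it falls into two compact connected pieces $\Gamma_a\ni a$ and $\Gamma_b\ni b$.

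The decisive step is the reconnection, and here the connectedness of $F$ does the work. Both pieces must be needed for coverage: were $F\subset B(\Gamma_a,r)$, then $\Gamma_a$ alone would be a connected competitor shorter than $\Gamma^\ast_r$. So $F\not\subset B(\Gamma_a,r)$ and $F\not\subset B(\Gamma_b,r)$, while $F\subset B(\Gamma_a,r)\cup B(\Gamma_b,r)$ because the discarded interior of $A$ covered nothing in $F$. A connected set covered by two closed sets, neither of which contains it, cannot have those two sets disjoint; thus $B(\Gamma_a,r)$ and $B(\Gamma_b,r)$ share a point $q\in F$, yielding $y_a\in\Gamma_a$ and $y_b\in\Gamma_b$ with $|y_a-y_b|\le|y_a-q|+|q-y_b|\le 2r$. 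Replacing $A$ by the segment $[y_a,y_b]$ restores connectivity, preserves the covering of $F$, and changes the length by $|y_a-y_b|-\mathrm{length}(A)\le 2r-\mathrm{length}(A)<0$ --- contradicting minimality. Hence no point of $\Gamma^\ast_r$ lies beyond $3r$ from $E$.

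I expect the main obstacle to be making the surgery legitimate in full generality rather than the idea itself. The clean two-piece picture above is exactly right when the excursion $A$ is a simple arc, but a priori the far region could meet $\Gamma^\ast_r$ in a branched piece, so that deleting it strands more than two components, and such branched excursions correspond precisely to the curve threading a topological ``hole'' of $B(E,r)$. Resolving this is where I would invoke both the planarity of the ambient space (through the Jordan curve theorem, to see that pieces stranded around a hole are forced, by having to cover the connected collar $F$ surrounding the hole, to come pairwise within $2r$ and hence to be reconnectable near $E$) and the structural regularity of minimizers recorded in the cited works --- that $\Gamma^\ast_r$ may be treated as a compact connected rectifiable set, indeed a finite topological tree, so that ``maximal far subarc'', its removal, and the insertion of a segment all stay within the admissible class and affect $\clH^1$ additively. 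Granting that regularity, the reconnection estimate above is the crux, and the connectedness of $F=B(E,r)$ is indispensable: for a disconnected target the far excursion could be a genuinely cheapest bridge and the statement would be false.
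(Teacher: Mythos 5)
Your two\nobreakdash-component surgery is sound, and in fact it runs parallel to the paper's own toolkit: the observation that points at distance greater than $2r$ from $E$ cover nothing of $B(E,r)$, the trimming of far loose ends, the use of the tree structure from Paolini--Stepanov, and the reconnection of two stranded pieces by a segment of length at most $2r$ obtained from connectedness of $B(E,r)$ are exactly the paper's pruning properties (P1)--(P2) and its chain lemma (Lemma \ref{chain}). But the branched case, which you defer to ``Jordan curve theorem plus regularity,'' is a genuine gap, and it is quantitative rather than topological, so planarity will not rescue it. Concretely, consider a far excursion $K$ shaped like a Steiner star: a triple point $c$ at distance $3r+\epsilon$ from $E$, with three straight legs of length $r+\epsilon$ descending at mutual angles of $120^\circ$ to attachment points $p_1,p_2,p_3$ at distance exactly $2r$ from $E$, below which hang three components $\Gamma_1,\Gamma_2,\Gamma_3$, each genuinely needed for coverage and pairwise at distance $|p_i-p_j|=\sqrt{3}(r+\epsilon)$ from one another. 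Removing $K$ saves $\clH^1(K)=3(r+\epsilon)$, but now three mutually necessary pieces must be rejoined: two links via your $F$-connectivity argument cost up to $4r$, two links between attachment points cost $2\sqrt{3}(r+\epsilon)\approx 3.46r$, and even the optimal Steiner reconnection of the three points costs exactly $3(r+\epsilon)$ --- because $K$ \emph{is} the Steiner tree of the pieces it joins. The surgery therefore never strictly shortens the curve, no contradiction is produced, and nothing in your argument excludes such configurations a priori. This is precisely why one cannot expect a constant like $3$ or $4$ from this method: at the $3r$--$4r$ scale, removed length and reconnection cost are comparable once branching occurs.

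The paper closes exactly this gap by working at a much larger scale, which is why its constant is large. Assuming $\Gamma^\ast_r$ exits $B(E,2Ar)$ for a big parameter $A$, it roots the tree at a far point $z$ and proves a pruning lemma: no loose branch can avoid $B(E,2r)$, and every limb outside $B(E,6r)$ has length at most $4r$. Iterating these two facts inside $B(z,Ar)$ forces a binary subtree whose limbs have length between $4r$ and $8r$, whence $\clH^1\bigl(\Gamma^\ast_r\cap B(z,Ar)\bigr)\geq 2^{A/8}\cdot 4r$, exponential in $A$ (Lemma \ref{lem: long substree}). The surgery is then global: delete everything in the open ball and add the whole circle $\partial B(z,Ar)$, which reconnects \emph{all} stranded components simultaneously at linear cost $2\pi Ar$, and the exponential-versus-linear comparison yields the contradiction for $A$ large, giving $C=2A$. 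The circle reconnection together with the exponential counting is the ingredient your proposal is missing; without some substitute for it, the branched case cannot be closed, and certainly not with the constant you claim.
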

}

{
\renewcommand{\thetheorem}{\ref{cor: Hausdorff conv length}}
\begin{cor}
    Under the assumptions above, we have that $\Gamma^\ast_r$ converges to $E$ in Hausdorff distance as $r \rightarrow 0^+$.
\end{cor}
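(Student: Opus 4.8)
The plan is to unpack the definition of the Hausdorff distance and verify that each of its two one-sided terms is controlled by a constant multiple of $r$. Recall that
\[
d_H(\Gamma^\ast_r, E) = \max\left\{\sup_{x \in \Gamma^\ast_r}\dist(x, E),\ \sup_{y \in E}\dist(y, \Gamma^\ast_r)\right\},
\]
so it suffices to produce an upper bound of the form $C' r$ for each supremum and then let $r \to 0^+$.

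For the term $\sup_{y \in E}\dist(y, \Gamma^\ast_r)$, I would use only the feasibility built into the minimizing property of $\Gamma^\ast_r$. Since $\Gamma^\ast_r$ is an $r$-maximum distance minimizer of $B(E,r)$, its defining constraint gives $B(\Gamma^\ast_r, r) \supset B(E, r) \supset E$; therefore every $y \in E$ lies within distance $r$ of $\Gamma^\ast_r$, and this supremum is at most $r$.

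For the term $\sup_{x \in \Gamma^\ast_r}\dist(x, E)$, I would invoke Theorem \ref{thm: Hausdorff conv H1}. Every point of the curve belongs to its own closed $r$-neighborhood, so $\Gamma^\ast_r \subset B(\Gamma^\ast_r, r)$, and the theorem furnishes an absolute constant $C > 1$ with $B(\Gamma^\ast_r, r) \subset B(E, Cr)$. Hence $\Gamma^\ast_r \subset B(E, Cr)$, which says exactly that this supremum is at most $Cr$. Combining the two estimates yields $d_H(\Gamma^\ast_r, E) \leq Cr$, which tends to $0$ as $r \to 0^+$, giving the claimed convergence.

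Since the geometric content has been entirely discharged into Theorem \ref{thm: Hausdorff conv H1}, I do not expect a genuine obstacle in this corollary; it is essentially a direct reading of the definition of Hausdorff distance. The only point deserving attention is keeping track of \emph{which} set is being minimized against: the inclusion needed for the $E$-side bound is $B(\Gamma^\ast_r, r) \supset B(E, r)$, which is available precisely because $\Gamma^\ast_r$ minimizes for the thickened set $B(E,r)$ rather than for $E$ itself.
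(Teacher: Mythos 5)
Your proof is correct and is precisely the intended argument: the paper states this corollary without a separate proof because it follows immediately from Theorem \ref{thm: Hausdorff conv H1} in exactly the way you describe, with the feasibility constraint $B(\Gamma^\ast_r,r)\supset B(E,r)$ controlling one side of the Hausdorff distance by $r$ and the inclusion $\Gamma^\ast_r\subset B(\Gamma^\ast_r,r)\subset B(E,Cr)$ controlling the other by $Cr$. Your closing remark about which set the minimizer is feasible for is also the right point to flag, and your argument handles it correctly.
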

}

The proof of Theorem \ref{thm: Hausdorff conv H1} is obtained  (via contradiction) by methodically analyzing the geometry of any minimizer $\Gamma^\ast_r$ not contained in a large ball around $E$. 

\begin{remark}
    As we proceed through the results, we find that many can be restated with the set $E$ in place of $B(E,r)$ (or $\widehat\gamma$ in place of $B(\widehat\gamma,r)$). The corresponding proofs can be achieved without significant alterations. More details can be found in Remarks \ref{rem:neigh MDP to ATSP B(E,r) to E}, \ref{rem:general case B(E,r) to E}, and \ref{rem:Convergence of minimizers B(E,r) to E}.
\end{remark}

\begin{ack}
The authors are grateful to the Institute of Advanced Study (IAS) for their hospitality. The IAS: Summer Collaborator Program provided invaluable support  for this project, which was partially completed while staying at IAS.
\end{ack}

 \section{Maximum distance minimizers and the Analyst's Traveling Salesman Problem}\label{sec: MDP_ATP}

For any subset $E\subset \bbR^n$, we will let $F_E$ be the non-negative function defined over subsets of $\bbR^n$ by the formula $F_E(A) := \max_{y \in E}\dist(y, A)$.
Additionally, we remind the reader about the definition of Hausdorff distance: given two sets $A,B \subset \R^n$, we set
$$d_H(A,B)=\max\{F_A(B),F_B(A)\}.$$
The following theorem shows that if $E$ is contained in a rectifiable curve, then the limit as $r$ tends to $0^+$ of $r$-maximum distance minimizers of $B(E, r)$ is a rectifiable curve containing $E$.  
In particular, the sequence of minimizers specifies a  solution curve to the Analyst's Traveling Salesman Problem for the set $E$.

\begin{thm}\label{thm: neigh MDP to ATSP}
Suppose $E$ can be covered by a rectifiable curve and let $r_j \to 0^+$. 
There exists a sequence $\{\Gamma^\ast_{r_j}\}$ of $r_{j}$-maximum distance minimizers of $B(E, r_j)$ and a rectifiable curve $\Gamma^\ast_0 \supset E$ such that 
\begin{enumerate}
\item[(a)] $\displaystyle\lim_{j\to \infty} d_H(\Gamma^\ast_{r_j},  \Gamma^\ast_0) = 0$ and 
\item[(b)] $ \displaystyle\lim_{j\to\infty}\clH^1(\Gamma_{r_j}^\ast)  =\clH^1(\Gamma^\ast_0) = \inf\{\clH^1(\Gamma) : \Gamma \text{ is a rectifiable curve, } \Gamma \supset E\}$.
\end{enumerate}
\end{thm}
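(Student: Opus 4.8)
The plan is to combine an elementary uniform length bound with a compactness argument (Blaschke selection together with Golab's lower semicontinuity theorem) and then to identify the resulting Hausdorff limit as a curve of least possible length containing $E$. Set $m := \inf\{\clH^1(\Gamma) : \Gamma \text{ rectifiable}, \Gamma \supset E\}$, which is finite since $E$ is covered by a rectifiable curve, and for each $j$ let $\Gamma^\ast_{r_j}$ be a minimizer of the MDP for $B(E,r_j)$ with radius $r_j$ (existence is the standard compactness argument recalled in the introduction). The crucial first observation is the upper bound $\clH^1(\Gamma^\ast_{r_j}) \leq m$: if $\Gamma \supset E$ is any rectifiable curve then for $x \in B(E,r_j)$ we have $\dist(x,\Gamma) \leq \dist(x,E) \leq r_j$, so $B(\Gamma, r_j) \supset B(E,r_j)$ and $\Gamma$ is an admissible competitor; taking the infimum over such $\Gamma$ gives $\clH^1(\Gamma^\ast_{r_j}) = \Lambda(B(E,r_j),r_j) \leq m$.

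Next I would arrange uniform boundedness so that compactness applies. Since nearest-point projection $\pi_j$ onto the compact convex set $K_j := \overline{\mathrm{conv}}(B(E,r_j))$ is $1$-Lipschitz and fixes every point of $B(E,r_j)$, the curve $\pi_j(\Gamma^\ast_{r_j})$ still covers $B(E,r_j)$ and has no larger $\clH^1$ measure; hence it is again a minimizer, and we may assume $\Gamma^\ast_{r_j} \subset K_j \subset \overline{\mathrm{conv}}(B(E,1))$, a fixed compact set once $r_j \leq 1$. The sets $\Gamma^\ast_{r_j}$ are then connected (continuous images of an interval), uniformly bounded, and of uniformly bounded length, so by Blaschke selection a subsequence converges in Hausdorff distance to a compact connected set $\Gamma^\ast_0$; I pass to this subsequence (still written $r_j$). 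Golab's lower semicontinuity theorem gives $\clH^1(\Gamma^\ast_0) \leq \liminf_j \clH^1(\Gamma^\ast_{r_j}) \leq m < \infty$, and since a continuum of finite $\clH^1$ measure is arcwise connected and rectifiable, $\Gamma^\ast_0$ is a rectifiable curve.

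Then I would identify the limit. For fixed $x \in E$ and every $j$, $\dist(x,\Gamma^\ast_{r_j}) \leq r_j$, so $\dist(x,\Gamma^\ast_0) \leq r_j + d_H(\Gamma^\ast_{r_j},\Gamma^\ast_0) \to 0$; as $\Gamma^\ast_0$ is closed this yields $E \subset \Gamma^\ast_0$, giving the containment asserted in (a) together with the Hausdorff convergence. Consequently $\Gamma^\ast_0$ is itself admissible for the ATSP, so $\clH^1(\Gamma^\ast_0) \geq m$; combined with the semicontinuity bound this forces $\clH^1(\Gamma^\ast_0) = m$. Finally $\limsup_j \clH^1(\Gamma^\ast_{r_j}) \leq m = \clH^1(\Gamma^\ast_0) \leq \liminf_j \clH^1(\Gamma^\ast_{r_j})$ gives $\lim_j \clH^1(\Gamma^\ast_{r_j}) = \clH^1(\Gamma^\ast_0) = m$, which is exactly (b).

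The steps demanding the most care are the two structural inputs: Golab's theorem, used both to transfer the length bound to the limit and to guarantee $\clH^1(\Gamma^\ast_0) < \infty$, and the classical fact that a compact connected set of finite length is a rectifiable curve, which is what makes $\Gamma^\ast_0$ a legitimate ATSP competitor. The genuinely subtle point is that a priori only a subsequence converges, and distinct subsequences could converge to distinct least-length curves through $E$ (there is no uniqueness of ATSP solutions); accordingly the statement is to be read as producing a suitable sequence of minimizers rather than asserting convergence of every choice. The projection argument securing uniform boundedness is routine but must be included, since Blaschke selection requires the curves to lie in a common compact set.
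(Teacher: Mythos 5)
Your proposal is correct and takes essentially the same route as the paper's proof: the admissibility bound $\clH^1(\Gamma^\ast_{r_j})=\Lambda(B(E,r_j),r_j)\leq \inf\{\clH^1(\Gamma):\Gamma \text{ rectifiable},\ \Gamma\supset E\}$, a $1$-Lipschitz projection to confine minimizers to a fixed compact set, Blaschke selection plus Golab's theorem, and identification of the Hausdorff limit as a rectifiable curve containing $E$, which forces equality of all the quantities. The only cosmetic differences are that you project onto a convex hull where the paper projects onto a large ball, and you close with a $\limsup/\liminf$ sandwich where the paper first extracts the monotone limit $L=\lim_{r\to 0^+}\Lambda(B(E,r),r)$; even the subsequence caveat you flag is present in the paper's own argument.
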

\begin{proof}
First notice that the limit
\begin{equation}\label{eq: boundonL_2}
L := \lim_{r \to 0+} \Lambda(B(E, r), r) \leq \inf\{\clH^1(\Gamma) : \Gamma \text{ is a rectifiable curve, } \Gamma \supset E\}
\end{equation}
exists since the map $r\mapsto \Lambda(B(E, r), r)$ is bounded and monotonic. 
In particular, $\Lambda(B(E, r), r) \leq \inf\{\clH^1(\Gamma) : \Gamma \text{ is a rectifiable curve, } \Gamma \supset E\}$ for all $r > 0$ since $B(A, r) \supset B(E, r)$ for any set $A \supset E$. 

Next, let $0 < r_i < 10$ be a sequence of real numbers converging to $0$, and let $\{\Gamma^\ast_{r_i}\}$ be a sequence of $r_i$-maximum distance minimizers of $B(E, r_i)$. 
That is, $B(\Gamma^\ast_{r_i}, r_i) \supset B(E, r_i)$ and $\clH^1(\Gamma^\ast_{r_i}) = \Lambda(B(E, r_i), r_i)$ for all $i \in \bbN$.
Since $E$ is bounded, there is some large enough ball $B(0, R - 20)$ containing $B(E, 10)$. 
We may assume that $\Gamma^\ast_{r_i} \subset B := B(0, R)$ for all $i \in \bbN$ since if they were not, then $\clH^1(\pi_B(\Gamma^\ast_{r_i})) \leq \clH^1(\Gamma^\ast_{r_i})$ and $B(\pi_B(\Gamma^\ast_{r_i}), r_i) \supset B(E,r_i)$. 
Additionally, we may assume that $\sup_i \clH^1(\Gamma^\ast_{r_i}) < \infty$. 

By the Blaschke selection theorem there exists a subsequence $\{\Gamma^\ast_{r_{j}}\}$ of $\{\Gamma^\ast_{r_i}\}$ and a compact set $\Gamma^\ast_0$ such that $\Gamma^\ast_{r_{j}} \xrightarrow{d_H} \Gamma^\ast_0$ as $j \to \infty$.
In addition, since $F_C(A) \leq F_B(A) + F_C(B)$ for any $A,B, C\subset \bbR^n$, and $B(E, r_{j}) \xrightarrow{d_H} E$ as $j \to \infty$, we get that
\begin{align}
F_E(\Gamma^\ast_0) &\leq F_{\Gamma^\ast_{r_{j}}}(\Gamma^\ast_0) + F_{B(E, r_{j})}(\Gamma^\ast_{r_{j}}) + F_E(B(E, r_{j}))\nonumber\\
&\leq F_{\Gamma^\ast_{r_{j}}}(\Gamma^\ast_0) + r_{j} + F_E(B(E, r_{j})) \to 0\label{eq: hausdorff convergence}
\end{align}
as $j \to \infty$. 
Thus, $F_E(\Gamma^\ast_0) = 0$ and hence $\Gamma^\ast_0 \supset E$. 
Now let us show that $\clH^1(\Gamma^\ast_0) = L$.
First we prove that $\clH^1(\Gamma^\ast_0) \leq L$. 
Since each $\Gamma^\ast_{r_j}$ is connected, Golab's theorem implies that $\Gamma^\ast_0$ is connected and 
\[
\clH^1(\Gamma^\ast_0) \leq \liminf_{j\to\infty}\clH^1(\Gamma^\ast_{r_{j}}) = \lim_{j\to \infty}\clH^1(\Gamma^\ast_{r_{j}}) = L.
\]
To show that $\clH^1(\Gamma^\ast_0) \geq L$, simply notice that $\clH^1(\Gamma^\ast_0) \geq \clH^1(\Gamma^\ast_{r_i})$ for all $i$ since $\Gamma^\ast_0$ is a rectifiable curve and since $B(\Gamma^\ast_0, r) \supset B(E, r)$ for all $r > 0$.
Therefore 
\begin{equation}\label{eq: LequalsH1_2}
L = \clH^1(\Gamma^\ast_0).
\end{equation}
Thus by ~(\ref{eq: boundonL_2}) and ~(\ref{eq: LequalsH1_2}), $\clH^1(\Gamma^\ast_0) = \inf\{\clH^1(\Gamma) : \Gamma \text{ a rectifiable curve, } \Gamma \supset E\}$.
\end{proof}

\begin{remark}\label{rem:neigh MDP to ATSP B(E,r) to E}
We may replace $B(E, r)$ with $E$ in the statement of Theorem~\ref{thm: neigh MDP to ATSP}, and still obtain the same conclusion.
The proof follows the proof of Theorem~\ref{thm: neigh MDP to ATSP} almost verbatim.
We simply need to replace inequality~(\ref{eq: hausdorff convergence}) with:
\[
F_E(\Gamma^\ast_0) \leq F_{\Gamma^\ast_{r_{j}}}(\Gamma^\ast_0) + F_E(\Gamma^\ast_{r_{j}}) \leq F_{\Gamma_{r_{j}}^\ast}(\Gamma^\ast_0) + r_{j} \to 0.
\]
\end{remark}

\section{The Maximum Distance Problem for H\"older curves}\label{sec: MDP Holder}

In this section, we provide measure bounds for $r$-maximum distance minimizers of $(\alpha, \beta)$-bi-H\"older curves as stated in Theorem \ref{thm: general case}.

For $\alpha \in (0, 1]$, we say that a map $\gamma : [0, 1] \to \bbR^n$ is an \emph{$\alpha$-H\"older curve} with constant $1 \leq  C_\gamma < \infty$  if 
\[ 
|\gamma(x)-\gamma(y)|\leq C_\gamma |x-y|^{\alpha} \text{  for all }x,y\in [0,1].
\]
 We denote the class of $\alpha$-H\"older curves by $\mathcal{C}^{0,\alpha}([0,1])$.  
If in addition, there exists a $\beta \geq \alpha$ such that 
\[
\frac{1}{C_\gamma} |x - y|^\beta \leq |\gamma(x)-\gamma(y)|\leq C_\gamma |x-y|^{\alpha} \text{  for all } x,y\in [0,1],
\]
 then we call $\gamma$ an \emph{$(\alpha, \beta)$-bi-H\"older curve with constant $C_\gamma$}, or simply an \emph{$(\alpha, \beta)$-bi-H\"older curve}. 
If $\beta = \alpha$ above, we simply say that $\gamma$ is an \emph{$\alpha$-bi-H\"older curve}.
Additionally, for any curve $\varphi:[0,1]\to\mathbb{R}^n$, we will denote $\varphi([0,1])$ by $\widehat{\varphi}$.

There are several known  $\alpha$-bi-H\"older curves, among them, the von Koch snowflake (with $\alpha=\log_4(3))$.
In the literature, $\alpha$-bi-H\"older curves are often presented as Lipschitz maps from the unit interval equipped with the snowflake metric into $\bbR^n$; see e.g. \cite{Koskla_94} and the references listed therein.

The study of the Maximum Distance Problem is more complicated whenever the set $E$ cannot be covered by a rectifiable curve, e.g. $E$ is the von Koch snowflake. For instance, we cannot use compactness arguments since the function $r \mapsto \Lambda(B(E, r), r)$ is unbounded. 
In order to understand the asymptotic behaviour of minimizers for more general sets, it is natural to start our study in the context of H\"older curves.

In order to introduce the general techniques in a more familiar context, we begin such a study in Section \ref{sec: neighborhoodsOfVonKoch} by first looking at $r$-maximum distance minimizers of $r$-neighborhoods of the von Koch snowflake, which will be denoted by $S$ (depicted in Figure \ref{fig: snowflake}). 
In Lemma~\ref{lem: snowflake}, and its Corollary~\ref{cor: snowflake corollary}, we establish precise estimates for  the behaviour of the function $r \mapsto \Lambda(B(S, r), r)$. 

{
\renewcommand{\thetheorem}{\ref{cor: snowflake corollary}}
\begin{cor}
Let $S$ be the $\frac{1}{3}$-von Koch snowflake.
There exists a constant $C > 1$ such that
\[
\frac{1}{C}r^{\frac{\alpha-1}{\alpha}} \leq \Lambda(B(S, r), r) \leq C r^{\frac{\alpha-1}{\alpha}},
\]
for all $0<r < 1$ where $\alpha=\log_4(3)$.
\end{cor}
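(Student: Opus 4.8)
The plan is to deduce the corollary from Lemma~\ref{lem: snowflake} by an interpolation across scales, the only additional ingredients being a monotonicity property of the map $r\mapsto \Lambda(B(S,r),r)$ and the arithmetic that converts the natural growth rate of the snowflake into the stated power of $r$. I will assume that Lemma~\ref{lem: snowflake} supplies, at the triadic scales $r=3^{-k}$ adapted to the $\tfrac13$-von Koch construction, two-sided bounds $\tfrac1C(4/3)^k\le \Lambda(B(S,3^{-k}),3^{-k})\le C(4/3)^k$; these reflect that at generation $k$ the snowflake carries $\sim 4^k$ edges of length $3^{-k}$, so a covering curve has length of order $4^k\cdot 3^{-k}=(4/3)^k$.

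The first step is to record that $g(r):=\Lambda(B(S,r),r)$ is non-increasing in $r$. The key observation is that if $B(\Gamma,s)\supset B(S,s)$ and $r\ge s$, then already $B(\Gamma,r)\supset B(S,r)$: given $x$ with $\dist(x,S)\le r$ and nearest point $e\in S$, either $|x-e|\le s$, in which case $x\in B(S,s)\subset B(\Gamma,s)$, or else the point $e'$ on the segment $[x,e]$ with $|e'-e|=s$ lies in $B(S,s)\subset B(\Gamma,s)$, whence $\dist(x,\Gamma)\le |x-e'|+s=|x-e|\le r$. Thus any competitor at scale $s$ is a competitor at every larger scale, and taking $\Gamma$ optimal at scale $s$ yields $g(r)\le g(s)$ whenever $r\ge s$.

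Next I carry out the interpolation. The exponent identity is $\frac{\alpha-1}{\alpha}=1-\frac1\alpha=1-\frac{\ln4}{\ln3}=-\frac{\ln(4/3)}{\ln3}$ for $\alpha=\log_4 3$, so that $(3^{-k})^{\frac{\alpha-1}{\alpha}}=(4/3)^k$; in particular $r\mapsto r^{\frac{\alpha-1}{\alpha}}$ is decreasing and, on each interval $3^{-(k+1)}<r\le 3^{-k}$, satisfies $(4/3)^k\le r^{\frac{\alpha-1}{\alpha}}<(4/3)^{k+1}$. Fixing such a $k$ for a given $r\in(0,1)$ and combining monotonicity with the lemma, I bound $g(r)\le g(3^{-(k+1)})\le C(4/3)^{k+1}\le \tfrac{4C}{3}\,r^{\frac{\alpha-1}{\alpha}}$ and $g(r)\ge g(3^{-k})\ge \tfrac1C(4/3)^k\ge \tfrac{3}{4C}\,r^{\frac{\alpha-1}{\alpha}}$. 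This gives both inequalities of the corollary with the single constant $C'=\tfrac{4C}{3}$, uniformly over $0<r<1$ (the top interval $1/3<r<1$ being handled identically using the scale-$1$ and scale-$1/3$ estimates).

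The main obstacle lies not in the corollary but in Lemma~\ref{lem: snowflake} on which it rests: the delicate part is the lower bound $\Lambda(B(S,3^{-k}),3^{-k})\gtrsim (4/3)^k$, which demands a genuinely geometric argument showing that any curve whose $r$-neighborhood engulfs $B(S,r)$ must make a separate excursion near each of the $\sim 4^k$ generation-$k$ features of the snowflake and therefore accumulate length $\gtrsim 4^k\cdot 3^{-k}$; the matching upper bound is the comparatively routine construction of an explicit covering curve, for instance a connected thickening of the generation-$k$ polygonal approximation. Granting the lemma, the corollary is exactly the monotonicity-plus-exponent bookkeeping above, and the only mild subtlety is verifying that the constant can be taken uniform on all of $(0,1)$ rather than merely for small $r$, which is guaranteed by the exact self-similarity of $S$.
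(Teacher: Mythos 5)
Your proof is correct, and it rests on the same two ingredients as the paper's (Lemma \ref{lem: snowflake} plus the unit-scale bounds of Lemma \ref{SnoflakeUnitScale}), but the mechanism for handling non-triadic radii is genuinely different. The paper's Lemma \ref{lem: snowflake} is not restricted to scales $3^{-k}$: for \emph{every} $0<r<\tfrac13$ it gives $\tfrac1C\Lambda(B(S,r'),r')(4/3)^{k_r}\le \Lambda(B(S,r),r)\le C\Lambda(B(S,r'),r')(4/3)^{k_r}$ with $r'=3^{k_r}r\in[\tfrac13,1)$, so the paper needs no interpolation at all: it replaces $\Lambda(B(S,r'),r')$ by absolute constants via Lemma \ref{SnoflakeUnitScale}, identifies $k_r=\lceil -\log_3 r-1\rceil$, removes the ceiling at the cost of enlarging $C$ by a factor of $4/3$, and concludes with the same exponent identity $(4/3)^{-\log_3 r}=r^{(\alpha-1)/\alpha}$ that you use. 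You instead extract from the lemma only the triadic estimates $\tfrac1C(4/3)^k\le\Lambda(B(S,3^{-k}),3^{-k})\le C(4/3)^k$ (which do follow from the lemma with $r'=\tfrac13$ together with Lemma \ref{SnoflakeUnitScale}) and bridge the gaps using monotonicity of $r\mapsto\Lambda(B(S,r),r)$, for which your nearest-point argument is correct; notably, this monotonicity is a fact the paper asserts without proof in Theorem \ref{thm: neigh MDP to ATSP} but does not invoke in this corollary. Your route is more robust in that it would survive knowing the lemma only at special scales, and it supplies a clean proof of the monotonicity; the paper's route is shorter because the lemma already covers all scales, so only the ceiling-removal bookkeeping remains. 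Your deferral of the real difficulty (the lower bound in Lemma \ref{lem: snowflake}) is appropriate, since that is exactly how the paper structures the argument.
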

}

In fact, when the authors started this line of investigation, Corollary \ref{cor: snowflake corollary} was one of the first results obtained. The result itself, and more importantly, the intuition behind proof, informs the more general Theorem \ref{thm: general case}.

\subsection{Elementary measure bounds}

\begin{lem}\label{lem: minimizerof2balls}
Let $a,b\in\bbR$. If $E = B(a, r)\cup B(b, r)$, then the line segment $[a, b]$ is an $r$-maximum distance minimizer of $E$ for any $r > 0$. 
\end{lem}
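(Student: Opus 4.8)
The plan is to verify the two defining properties of an $r$-maximum distance minimizer: that the segment $[a,b]$ is \emph{admissible} (its $r$-neighborhood contains $E$) and that it is \emph{optimal} (no shorter rectifiable curve is admissible). Admissibility is immediate: if $x \in B(a,r)$ then, since $a \in [a,b]$, we have $\dist(x, [a,b]) \leq |x - a| \leq r$, and symmetrically for $x \in B(b,r)$; hence $B([a,b], r) \supset B(a,r) \cup B(b,r) = E$. Since $\clH^1([a,b]) = |a - b|$, it remains to show $\Lambda(E, r) \geq |a-b|$, i.e.\ that every admissible rectifiable curve has length at least $|a-b|$.

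For the lower bound I would exploit the two points of $E$ lying farthest apart along the line through $a$ and $b$. Writing $u = (b-a)/|b-a|$ (assuming $a \neq b$; the case $a = b$ is trivial, since then $E$ is a single ball and the point $\{a\}$ is optimal), set $p_a := a - ru$ and $p_b := b + ru$, so that $p_a \in B(a,r) \subset E$, $p_b \in B(b,r) \subset E$, and $|p_a - p_b| = |a - b| + 2r$. Let $\Gamma$ be any rectifiable curve with $B(\Gamma, r) \supset E$. Then there exist $q_a, q_b \in \Gamma$ with $|p_a - q_a| \leq r$ and $|p_b - q_b| \leq r$, and the triangle inequality gives
\[
|q_a - q_b| \geq |p_a - p_b| - |p_a - q_a| - |p_b - q_b| \geq (|a-b| + 2r) - r - r = |a - b|.
\]

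Finally I would convert this separation of two points of $\Gamma$ into a length bound via connectedness. Since $\Gamma$ is a rectifiable curve it is connected, so its orthogonal projection $\pi$ onto the line through $q_a$ and $q_b$ is a connected subset of that line containing both $\pi(q_a) = q_a$ and $\pi(q_b) = q_b$; thus $\pi(\Gamma)$ is an interval of length at least $|q_a - q_b|$. As $\pi$ is $1$-Lipschitz, $\clH^1(\Gamma) \geq \clH^1(\pi(\Gamma)) \geq |q_a - q_b| \geq |a - b|$. Combined with admissibility, this yields $\clH^1([a,b]) = |a-b| = \Lambda(E, r)$, so $[a,b]$ is an $r$-maximum distance minimizer.

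The routine parts here are admissibility and the projection estimate; the only real content is the choice of the extreme points $p_a, p_b$, whose separation exceeds $|a-b|$ by exactly $2r$, so that the two $r$-sized perturbations forced by the neighborhood condition are absorbed precisely. I expect this selection of the farthest pair of points of $E$ — rather than, say, the centers $a,b$ themselves — to be the step that makes the bound tight, and it is the one worth stating carefully.
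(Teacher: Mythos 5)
Your proof is correct and follows essentially the same route as the paper's: admissibility is checked identically, and the lower bound rests on the same observation that the far extreme points of the two balls force any admissible connected curve to span the distance $|a-b|$, with a $1$-Lipschitz projection converting separation into length. The only cosmetic difference is that the paper phrases the obstruction via two parallel half-spaces at distance $|a-b|$ (which the curve must meet), whereas you use the extreme points $p_a,p_b$ and a triangle inequality before projecting.
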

\begin{proof}
Let $r > 0$.
First, notice that $B([a, b], r) \supset E$ since the line segment contains the points $a$ and $b$. 
Now, without loss of generality, assume that the line segment $[a, b]$ lies on the first coordinate axis such that $a = (0,\dots, 0)$, and $b = (|a - b|, 0, \dots, 0)$. 
Any $r$-maximum distance minimizer, $\Gamma^\ast_r$ must have a non-empty intersection with the two half-spaces $H_1 := \{(x_1, \dots, x_n) : x_1 \leq 0\}$ and $H_2 := \{(x_1, \dots, x_n) : x_1 \geq |a - b|\}$, otherwise, $B(\Gamma^\ast_r, r)$ would not contain $E$.
Since 
\[
\inf\{|x - y| : x \in H_1, y \in H_2\} = |a - b| = \clH^1([a, b]),
\]
the line segment $[a, b]$ is an $r$-maximum distance minimizer of $E$. 
\end{proof}

\begin{lem}\label{lem:diam estimate}
Given a compact set $E$, we have that    for all  $r >0$,
\[
 \Lambda(B(E, r), r) \geq \diam(E) .
\] 
\end{lem}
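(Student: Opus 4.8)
The plan is to bound the length of any competitor from below by the length of its orthogonal projection onto a well-chosen line. Fix $r>0$ and let $\Gamma$ be any rectifiable curve with $B(\Gamma,r)\supset B(E,r)$; it suffices to show $\clH^1(\Gamma)\geq \diam(E)$, since taking the infimum over all such $\Gamma$ then yields the stated bound on $\Lambda(B(E,r),r)$.

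Since $E$ is compact, I would first choose $a,b\in E$ realizing the diameter, so that $|a-b|=\diam(E)=:d$, and rotate and translate coordinates so that $a=0$ and $b=de_1$, where $e_1$ is the first standard basis vector. The crucial step is then to push these points outward by $r$: the displaced points $a':=a-re_1$ and $b':=b+re_1$ satisfy $a'\in B(a,r)$ and $b'\in B(b,r)$, hence both lie in $B(E,r)\subset B(\Gamma,r)$. Consequently there exist $p,q\in\Gamma$ with $|p-a'|\leq r$ and $|q-b'|\leq r$, and comparing first coordinates gives $p_1\leq a_1'+r=0$ and $q_1\geq b_1'-r=d$.

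To finish, I would use that $\Gamma$ is connected (being a curve) and that the projection $\pi_1$ onto the first coordinate is $1$-Lipschitz. Thus $\pi_1(\Gamma)$ is a connected subset of $\bbR$ containing both $p_1\leq 0$ and $q_1\geq d$, so it contains the interval $[0,d]$ and $\clH^1(\pi_1(\Gamma))\geq d$. Since $1$-Lipschitz maps do not increase $\clH^1$, we conclude $\clH^1(\Gamma)\geq \clH^1(\pi_1(\Gamma))\geq d=\diam(E)$, as desired.

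I expect the only real subtlety to be the push-out by $r$: applying the argument directly to $a$ and $b$ (which also lie in $B(\Gamma,r)$) would only produce points of $\Gamma$ within $r$ of $a$ and $b$, yielding the weaker bound $\clH^1(\Gamma)\geq d-2r$. Replacing $a,b$ by the displaced points $a',b'\in B(E,r)$ is exactly what absorbs the two radii and recovers the sharp constant $\diam(E)$; this mirrors the half-space reasoning already used in Lemma~\ref{lem: minimizerof2balls}, where $\Gamma$ is forced to meet two separated half-spaces.
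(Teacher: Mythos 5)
Your proof is correct and rests on the same geometric idea as the paper's: the paper deduces the lemma from Lemma~\ref{lem: minimizerof2balls} (the two-ball case, proved by exactly this push-out-by-$r$/half-space argument) together with monotonicity of $\Lambda$ under set inclusion, whereas you inline that argument for an arbitrary competitor and close it with the $1$-Lipschitz projection bound in place of the distance between the two half-spaces. In substance the two proofs coincide; yours is just the self-contained version of the paper's modular one.
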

\begin{proof}
Since $E$ is a compact set, there exist two points $a, b \in E$ such that $|a - b| = \diam(E)$. 
Let $B_a := B(a, r)$ and let $B_b := B(b, r)$. 
By Lemma~\ref{lem: minimizerof2balls}, $[a, b]$ is an $r$-maximum distance minimizer of $B_a \cup B_b$, and hence 
\[
 \diam(E) = |a - b| = \Lambda(B_a\cup B_b, r) \leq \Lambda(B(E,r), r),
\]
where the last inequality is due to the fact that $\Lambda(E_1, r) \leq \Lambda(E_2, r)$ whenever $E_1 \subset E_2$. 
\end{proof}


\subsection{Investigating the von Koch Snowflake as a case study}\label{sec: neighborhoodsOfVonKoch}

First, we briefly estimate $\Lambda(B(S,r),r)$ at large scales before turning to the small scale case.

\pgfdeclarelindenmayersystem{Koch curve} 
{\rule {F -> F+F--F+F}}

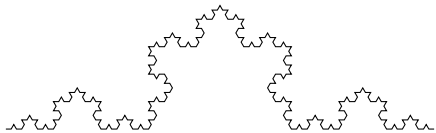
\begin{figure}
    \centering
    \begin{tikzpicture}
        \draw[l-system={Koch curve, step=2pt, angle=60, axiom=F, order=4}] lindenmayer system;
    \end{tikzpicture}
    \caption{The standard von Koch snowflake}
    \label{fig: snowflake}
\end{figure}

\begin{lem}\label{SnoflakeUnitScale}
    Let $\frac13 \leq r< 1$. Then $1\leq \Lambda(B(S,r), r)\leq  3$. 
\end{lem}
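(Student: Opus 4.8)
The plan is to establish the two inequalities separately, both as direct consequences of the geometry of the snowflake $S$ and the lemmas already proven. Recall that the standard von Koch snowflake $S$ is constructed so that it has diameter $1$ (the base segment of the generating curve having unit length), and it is contained in a region of bounded size. The key geometric facts I would invoke are: $S$ is path connected, $\diam(S) = 1$, and $S$ is contained in some explicit convex set (e.g. an equilateral triangle of side $1$) whose geometry gives easy upper bounds.

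For the lower bound $\Lambda(B(S,r),r) \geq 1$, I would apply Lemma~\ref{lem:diam estimate} directly: since $S$ is compact, we have $\Lambda(B(S,r),r) \geq \diam(S) = 1$ for every $r > 0$, and in particular for $\frac13 \leq r < 1$. This requires only confirming that the snowflake is normalized to have unit diameter, which follows from its standard construction on a unit base.

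For the upper bound $\Lambda(B(S,r),r) \leq 3$, the strategy is to exhibit an explicit competitor curve $\Gamma$ of length at most $3$ whose $r$-neighborhood contains $B(S,r)$, and then use $\Lambda(B(S,r),r) \leq \clH^1(\Gamma)$ by definition of the infimum. Since $B(S,r) \subset B(\Gamma, r)$ is implied by $S \subset B(\Gamma, r - ?)$ being too strong, I would instead argue that it suffices to cover $S$ well: precisely, if $\Gamma$ is a curve with $F_S(\Gamma) \leq 0$, i.e. $S \subset B(\Gamma, 0)$, that is overly demanding, so the cleaner route is to note that $B(S,r) \subset B(\Gamma,r)$ holds whenever $S \subset \Gamma$, and more generally when every point of $S$ lies within distance $0$ of $\Gamma$. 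The natural competitor is therefore a connected curve through $S$; I would take $\Gamma$ to be the boundary-tracing path or a spanning tree of the three outermost vertices of the snowflake. Concretely, since $S$ sits inside an equilateral triangle of side length $1$, a path consisting of two sides of that triangle (total length $2$), or a path visiting the three corners, has length at most $3$ and, because $r \geq \frac13$ is comparatively large, its $r$-neighborhood engulfs all of $B(S,r)$. The main point to verify is that at this coarse scale $r \geq \frac13$, the chosen curve's $r$-neighborhood contains $B(S,r)$, i.e. that $F_{B(S,r)}(\Gamma) \leq r$, which reduces to checking $F_S(\Gamma) \leq 0$ fails but $F_S(\Gamma)$ is small enough that combined with the neighborhood it still works.

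The main obstacle will be pinning down the correct competitor and the precise containment $B(S,r) \subset B(\Gamma,r)$ for the full range $\frac13 \leq r < 1$, since for the neighborhood inclusion to hold one genuinely needs $S \subset \Gamma$ (so that $B(S,r) \subset B(\Gamma,r)$ trivially), or else a quantitative estimate showing every point of $S$ lies within the tolerance afforded by $r$. I expect the clean resolution is to choose $\Gamma$ to be a curve that literally contains $S$ when $r$ is near $1$, or to exploit that for $r \geq \frac13$ the first-generation approximation of the snowflake — a path of three unit-scaled segments of total length at most $3$ — already has its $r$-neighborhood covering the entire fractal $S$ and hence $B(S,r)$; verifying this containment via the self-similar bound on how far $S$ deviates from its first-generation polygonal approximation is the one genuinely computational step.
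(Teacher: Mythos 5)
Your lower bound is exactly the paper's argument --- Lemma~\ref{lem:diam estimate} together with $\diam(S)\geq 1$ --- and it is correct. The upper bound, however, has a genuine gap, and both escape routes you sketch fail. First, you cannot ``choose $\Gamma$ to be a curve that literally contains $S$'': the von Koch set has Hausdorff dimension $\log_3 4>1$, so $\clH^1(S)=\infty$ and no rectifiable curve contains $S$; this impossibility is precisely why the paper studies $\Lambda(B(S,r),r)$ at all. Second, your fallback inference --- that a first-generation polygonal approximation $\Gamma$ satisfies $S\subset B(\Gamma,r)$ and ``hence'' $B(S,r)\subset B(\Gamma,r)$ --- is false as a general implication: $F_S(\Gamma)\leq \epsilon$ only yields $B(S,r)\subset B(\Gamma,r+\epsilon)$, so the deviation $\epsilon$ is lost, not absorbed. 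Your own closing sentence (``$F_S(\Gamma) \leq 0$ fails but $F_S(\Gamma)$ is small enough that combined with the neighborhood it still works'') is exactly where the argument breaks: closeness of $S$ to $\Gamma$ does not, by itself, give the required containment at radius exactly $r$.

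The missing idea, which is the paper's proof, is to take $\Gamma=\partial R$, the boundary of a \emph{thin convex region} containing $S$; the paper uses $R=[0,1]\times\big[0,\tfrac13\big]$, whose perimeter is $\tfrac83<3$. Then $B(S,r)\subset B(\partial R,r)$ holds exactly, with no loss, by a two-case argument: a point of $B(S,r)$ lying outside $R$ is within $r$ of $\partial R$ because the segment joining it to its nearest point of $S\subset R$ must cross $\partial R$; and a point lying inside $R$ is within half the width of $R$, namely $\tfrac16\leq r$, of $\partial R$. Neither case requires $S\subset \partial R$ nor any estimate of how far $S$ deviates from $\partial R$; the crossing property and the thinness of $R$ do all the work. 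Your triangle idea can be repaired in the same way --- the full boundary of the triangle with vertices $(0,0)$, $\big(\tfrac12,\tfrac{\sqrt3}{6}\big)$, $(1,0)$ has perimeter below $3$ and inradius below $\tfrac13$, so it serves as a competitor --- but as written, a path of two sides, or a path through the corners justified only by $S$ being near it, does not yield $B(S,r)\subset B(\Gamma,r)$.
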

\begin{proof}
    The lower bound follows from the fact that $\diam(S)\geq 1$ together with Lemma \ref{lem:diam estimate}. Let us prove the upper bound. 
Firstly, elementary observations show  that 
$$\pi_{1}(S)\subset [0,1] \quad \text{ and } \quad 0\leq\pi_{2} \, (x)\leq\frac{1}{3},$$
for every $x \in S$, where $\pi_i$ is the standard projection onto the $i$th coordinate ($i=1,2$).
Consequently, we have that for any $r \geq \frac{1}{3}$, \[B(S,r) \subset B\Big(\partial \big([0,1]\times \big[0,\tfrac{1}{3}\big]\big),r\Big).\]  Since 
$\partial([0,1]\times [0,\frac{1}{3}])$ is a Lipschitz curve with $\clH^1$-measure less then $3$, we obtain the lemma.
\end{proof}

For the investigation at scales $0< r< \frac13$, we use the inherent self-similarity of the von Koch snowflake. For that purpose, it is useful to have an appropriate scale of detail -- depending on the neighborhood size -- at which to view a H\"older curve. The following claim will help us to do so.
\begin{claim}\label{claim:k_r}
    For $0<r<\frac13$, there exists $k_r\in\mathbb{Z}^+$ and $\frac{1}{3}\leq r'<1$ such that $r'=3^{k_r}r$. 
\end{claim}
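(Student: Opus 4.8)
The plan is to view the claim as simply locating $r$ within a base-$3$ partition of the interval $\left(0,\tfrac13\right)$. Explicitly, I would introduce the half-open intervals
\[
I_k := \left[3^{-(k+1)},\, 3^{-k}\right), \qquad k \in \mathbb{Z}^+,
\]
and first check that they are pairwise disjoint with union exactly $\left(0,\tfrac13\right)$. This is immediate: consecutive intervals meet at the shared endpoint $3^{-(k+1)}$ in the expected telescoping fashion, $I_1 = \left[\tfrac19,\tfrac13\right)$ is the rightmost piece, and $3^{-k} \to 0$ as $k \to \infty$, so no point of $\left(0,\tfrac13\right)$ is left uncovered.

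Since $0 < r < \tfrac13$, the number $r$ then lies in exactly one of these intervals, and I would define $k_r \in \mathbb{Z}^+$ to be the unique index with $r \in I_{k_r}$, that is,
\[
3^{-(k_r+1)} \le r < 3^{-k_r}.
\]
Multiplying through by $3^{k_r}$ and setting $r' := 3^{k_r} r$ yields $\tfrac13 \le r' < 1$, which is precisely the desired conclusion.

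If one prefers an explicit formula over the partition argument, I would instead take $k_r := \lceil -\log_3 r \rceil - 1$ and verify directly from $0 < r < \tfrac13$ that $k_r$ is a positive integer and that $3^{k_r} r$ lands in $\left[\tfrac13,1\right)$; the two approaches are equivalent. There is no genuine obstacle here, as the statement is elementary. The only point that actually requires attention is confirming that $k_r$ is a \emph{positive} integer rather than merely a nonnegative one, and this is exactly where the strict hypothesis $r < \tfrac13$ enters: it forces $r$ into $I_k$ for some $k \ge 1$, guaranteeing $k_r \in \mathbb{Z}^+$.
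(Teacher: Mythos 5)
Your proof is correct and is in substance the same as the paper's: the paper defines $k_0=\sup\left\{k\in\mathbb{Z} : 3^k r<\tfrac13\right\}$, deduces $\tfrac19\leq 3^{k_0}r<\tfrac13$ by contradiction, and sets $k_r=k_0+1$, which is exactly the unique index singled out by your triadic partition $I_k=\left[3^{-(k+1)},3^{-k}\right)$. Both arguments simply locate $r$ at its unique triadic scale, so no further comment is needed.
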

    Indeed, take 
        $$k_0=\sup \left\{k\in\mathbb{Z} : 3^kr<\frac13\right\}.$$
    We note that if $3^{k_0}r< \frac19$, then $3^{k_0+1}r< \frac13$, which is a contradiction to $k_0$ being the supremum. Therefore, we have $\frac19\leq 3^{k_0}r<\frac13$. This implies that $\frac13 \leq 3^{k_0+1}r<1$, and so we take $k_r=k_0+1$.

\begin{lem}\label{lem: snowflake}
    There exists $C>1$ such that for any $0<r<\frac13$, we have 
    $$\frac1C \Lambda(B(S,r'), r') \Big(\frac{4}{3}\Big)^{k_r}\leq \Lambda(B(S,r), r) \leq C\Lambda(B(S,r'), r')\Big(\frac{4}{3}\Big)^{k_r},$$
    where $r'$ and $k_r$ are as in Claim \ref{claim:k_r}. 
\end{lem}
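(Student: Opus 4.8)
The plan is to reduce the whole statement to a single scaling identity together with the uniform estimate
\[
\Lambda\big(B(3^{k_r}S,\,r'),\,r'\big)\asymp 4^{k_r},
\]
with absolute constants. The identity comes from homogeneity of the problem: for $\lambda>0$ the dilation $x\mapsto\lambda x$ sends any curve admissible for $(A,r)$ to one admissible for $(\lambda A,\lambda r)$ and multiplies $\clH^1$ by $\lambda$, so $\Lambda(\lambda A,\lambda r)=\lambda\,\Lambda(A,r)$. Taking $\lambda=3^{k_r}$, $A=B(S,r)$, and using $3^{k_r}B(S,r)=B(3^{k_r}S,3^{k_r}r)=B(3^{k_r}S,r')$ by Claim~\ref{claim:k_r}, I get
\[
\Lambda(B(S,r),r)=3^{-k_r}\,\Lambda\big(B(3^{k_r}S,\,r'),\,r'\big).
\]
Since Lemma~\ref{SnoflakeUnitScale} gives $1\le\Lambda(B(S,r'),r')\le 3$ for $r'\in[\tfrac13,1)$, proving $\Lambda(B(3^{k_r}S,r'),r')\asymp 4^{k_r}$ immediately yields both inequalities of the lemma, because $3^{-k_r}4^{k_r}=(4/3)^{k_r}$ and the unit-scale factor $\Lambda(B(S,r'),r')$ is bounded above and below. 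I emphasize that this estimate must be obtained \emph{in one shot} from the $k_r$-fold self-similarity: a one-step comparison between scales $r$ and $3r$ cannot be iterated, since its multiplicative error would compound to $C^{k_r}$.

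Write $k=k_r$ and use that, by self-similarity of the von Koch curve, $3^{k}S$ is the union of $4^{k}$ isometric copies $S_1,\dots,S_{4^k}$ of $S$. For the \textbf{upper bound} I would take, for each tile, a near-optimal curve $\gamma_i$ covering $B(S_i,r')$, so $B(\gamma_i,r')\supset B(S_i,r')$ and $\clH^1(\gamma_i)\le\Lambda(B(S,r'),r')+\tfrac1{4^k}$. Then
\[
B(3^kS,r')=\bigcup_i B(S_i,r')\subset\bigcup_i B(\gamma_i,r')=B\Big(\bigcup_i\gamma_i,\,r'\Big),
\]
so $\bigcup_i\gamma_i$ already $r'$-covers $B(3^kS,r')$. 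Consecutive tiles share an endpoint $v$, and each $\gamma_i$ has a point within $r'\le1$ of $v$; joining the $\gamma_i$ along the curve by $4^k$ segments of length $O(1)$ produces a genuine connected rectifiable curve (adding segments only enlarges the neighborhood, preserving the covering). This gives $\Lambda(B(3^kS,r'),r')\le 4^k\Lambda(B(S,r'),r')+C4^k\le C'4^k\Lambda(B(S,r'),r')$, where the last step uses $\Lambda(B(S,r'),r')\ge1$.

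For the \textbf{lower bound}, note that any admissible $\Gamma$ satisfies $B(\Gamma,r')\supset 3^kS$, hence $\Gamma$ lies within $r'$ of every point of $3^kS$. I would select a $4$-separated subset $\{p_1,\dots,p_N\}\subset 3^kS$ with $N\ge c\,4^k$: since the von Koch curve is Ahlfors $d$-regular with $d=\log_3 4$ and $3^{d}=4$, the scale-$4$ packing number of $3^kS$ is comparable to $(3^k)^{d}=4^k$. Picking $q_i\in\Gamma$ with $|q_i-p_i|\le r'$ gives $|q_i-q_j|\ge 4-2r'\ge 2=:\delta$, so the balls $B(q_i,\delta/2)$ are pairwise disjoint. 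As $\Gamma$ is connected and contains $q_i$ together with some $q_j\notin B(q_i,\delta/2)$, the component of $\Gamma\cap\overline{B(q_i,\delta/2)}$ through $q_i$ reaches the bounding sphere and so has $\clH^1\ge\delta/2$; summing over the disjoint balls yields $\clH^1(\Gamma)\ge N\delta/2\ge c\,4^k$, and this is $\ge \tfrac1{C}4^k\Lambda(B(S,r'),r')$ since $\Lambda(B(S,r'),r')\le 3$.

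The hard part will be the lower bound. A tile-by-tile estimate \emph{fails}: when $r'$ is close to $1$ a single point can $r'$-cover an entire unit copy $S_i$ (its diameter is $1$), so length cannot be forced locally. The remedy is the global separated-net argument above, and its delicate point is uniformity — both the count $N\gtrsim 4^k$ of $4$-separated points (from Ahlfors regularity of $S$) and the continuum length bound $\clH^1(\Gamma)\ge N\delta/2$ must hold with constants independent of $k$ and of $r'\in[\tfrac13,1)$. Establishing that uniform separation cleanly, and verifying the regularity constants for $S$, is where the main work lies; the upper bound and the scaling identity are then routine.
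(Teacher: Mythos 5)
Your proof is correct, but it takes a genuinely different route from the paper's, most visibly in the lower bound. The paper works directly at scale $r$: for the upper bound it places explicit rectangles $\partial([0,3^{-k_r}]\times[0,3^{-(k_r+1)}])$ around the $4^{k_r}$ tiles of size $3^{-k_r}$, and for the lower bound it argues by contradiction, grouping the scale-$3^{-k_r+1}$ tiles into $4^{k_r-2}$ groups of four, proving a separation claim for the second tile of each group (Claim~\ref{clm:no_overlap}), pigeonholing to find one tile whose nearby portion of $\Gamma$ is too short, and rescaling that portion to contradict the unit-scale bound $\Lambda(B(S,r'),r')\geq 1$ of Lemma~\ref{SnoflakeUnitScale}. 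You instead use the exact homogeneity $\Lambda(\lambda A,\lambda r)=\lambda\Lambda(A,r)$ to reduce the whole lemma to the unit-neighborhood estimate $\Lambda(B(3^{k_r}S,r'),r')\asymp 4^{k_r}$, prove the upper bound by gluing near-optimal unit-scale competitors over the $4^{k_r}$ isometric tiles, and prove the lower bound by a packing argument: roughly $4^{k_r}$ four-separated points of $3^{k_r}S$, each forcing length at least $1$ of $\Gamma$ inside pairwise disjoint balls. Note that the paper's pigeonhole scheme and your scaling reduction play the same structural role (the paper ``scales upwards by $3^{k_r-1}$'' inside its contradiction), but your lower bound is more robust: it avoids the delicate tile-separation geometry entirely and would apply verbatim to any connected Ahlfors regular set. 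The price is that you import Ahlfors regularity of the von Koch curve as an external ingredient (standard via the open set condition, but it must be cited or proved), whereas the paper's argument is self-contained and its structure prefigures the proof of the general bi-H\"older lower bound in Lemma~\ref{lem: lower bound}.

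Two small points to patch. First, your packing bound is vacuous at the smallest scales: for $k_r=1$ the set $3S$ has diameter $3$, so it contains no two $4$-separated points and the estimate $N\gtrsim 4^{k_r}$ with a uniform constant fails there; handle $k_r=1$ (and, if you prefer, $k_r=2$) by the diameter bound of Lemma~\ref{lem:diam estimate}, exactly as the paper does. Second, your ball-crossing estimate needs $N\geq 2$ (so that $\Gamma$ exits each ball); this is again only an issue at the smallest scales and is absorbed the same way. With these trivial adjustments, and a citation for Ahlfors regularity, your argument is complete.
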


\begin{proof}
We will first prove the lower bound.  In the case  $k_r=1$ or $k_r=2$, we simply observe that $\diam(S)\ge 1$.  Then, via Lemma \ref{lem:diam estimate}, we immediately obtain \[\Lambda(B(S,r),r)\ge \diam(S)\ge 1\ge \frac{1}{3}\Lambda(B(S,r'),r');\] the last inequality follows from Lemma \ref{SnoflakeUnitScale}. 

We now deal with the case $k_r\ge 3$. Assume  that there exists a rectifiable curve $\Gamma$ with $B(\Gamma, r)\supset B(S,r)$ and $\mathcal{H}^1(\Gamma) < \frac1L \Lambda(B(S,r'), r') \big(\frac43 \big)^{k_r}$, for some $L>0$ big enough to be determined later. 
By self-similarity,  we have $4^{k_r-1}$ distinct copies of the snowflake $S$ of size $3^{-k_r+1}$ within $S$.  We divide $S$ into groups of 4 consecutive  size $3^{-k_r+1}$ snowflakes;  there are $4^{k_r-2}$ such groups. Going from left to right, we label the groups as $\widetilde{A}_{1}, \widetilde{A}_{2}, ..., \widetilde{A}_{4^{k_r-2}}$. For a particular group $\widetilde{A}_{j}$, the four snowflakes in the group will be labeled $A_{j,1}$, $A_{j,2}$, $A_{j,3}$, $A_{j,4}$. Additionally, we define 
$$\Gamma_{\widetilde{A}_{j}}=\{x\in \Gamma : B(x,r)\cap \widetilde{A}_{j} \neq\varnothing\},$$
and 
$$\Gamma_{A_{j,i}}=\{x\in \Gamma : B(x,r)\cap A_{j,i} \neq\varnothing\}, $$
for $j=1,...,4^{k^r-2}$  and $ i=1,...,4$.

\begin{claim}
\label{clm:no_overlap}
    $\Gamma_{A_{j,2}} \cap \Gamma_{\widetilde{A}_{m}} = \varnothing$ when $j\neq m$.
\end{claim}

To verify this claim, it is enough to assume that $\widetilde{A}_m$ is the sub-snowflake of $S$ immediately to the right of $\widetilde{A}_{j}$. 
 We consider covers of sub-snowflakes of $S$ by triangles that are dilations and rotations of the triangle with vertices $(0,0)$, $(1/2, \sqrt{3}/{6})$, and $(1,0)$.  In particular, to cover $\widetilde{A}_m$ we use a triangle $\triangle_m$ at scale $3^{-k_r+2}$ and to cover each of $A_{j_1}$ and $A_{j,2}$ we use triangles $\triangle_{j,1} $ and $\triangle_{j,2}$ at scale $3^{-k_r+1}$ as in Figure \ref{fig: separation}. The distance between $\widetilde{A}_m$ and $A_{j,2}$ is at least $\diam(\triangle_{j,1})=3^{-k_r+1}$. Now the desired result holds as long as $2r<3^{-k_r+1}$ or, equivalently, $3^{k_r}r<\frac{3}{2}$.   But by definition, $3^{k_r}r<1$.  Therefore the claim is proven.

 \begin{figure}
    \centering
     \includegraphics[scale=.75]{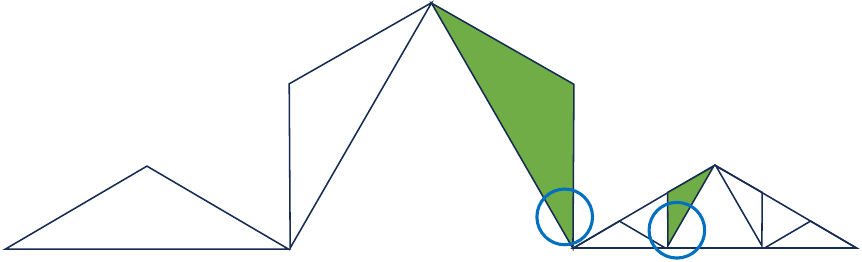}
     \caption{$A_{j,2}$ and $\widetilde{A}_m$ are sufficiently separated so that a ball of radius $r$ centered in $A_{j,2}$ cannot intersect a ball of radius $r$ centered in $\widetilde{A}_m$.}
    \label{fig: separation}
 \end{figure}

Now, for each $j=1,2,\dots, 4^{k_r-2}$, we define a subset of $\widetilde A_j$ that will not overlap with distinct $\widetilde A_m$:
$$\Gamma_{j,\text{no}} := \{ x\in\Gamma : B(x,r)\cap \widetilde{A}_j \neq \varnothing \text{ and } B(x,r)\cap \widetilde{A}_{m} = \varnothing \text{ for } j\neq m \}.$$

We observe the union, 
$$\Gamma_{\text{no}} := \bigcup_{j=1}^{4^{k_r-2}} \Gamma_{j,\text{no}},$$
is disjoint.
Hence, we have that
$$ \mathcal{H}^1(\Gamma) \geq  \mathcal{H}^1(\Gamma_{\text{no}}) \geq\sum_{j=1}^{4^{k_r-2}} \mathcal{H}^1(\Gamma_{j,\text{no}}) \geq \sum_{j=1}^{4^{k_r-2}} \mathcal{H}^1(\Gamma_{A_{j,2}}),$$
where the last inequality follows by Claim \ref{clm:no_overlap}.
From this, we can see that there exists $j$ such that $ \mathcal{H}^1(\Gamma_{A_{j,2}}) < \frac{16}{4^{k_r}}\mathcal{H}^1(\Gamma)$ and since 
$$\frac{1}{L}\Lambda(B(S,r'), r')\frac{4^{k_r}}{3^{k_r}} > \mathcal{H}^1(\Gamma),$$ 
we have $\mathcal{H}^1(\Gamma_{A_{j,2}})<\frac{16}{L}\Lambda(B(S,r'), r')\frac{1}{3^{k_r}}$. By picking $L>0$ big enough, and using the self-similarity of $S$ to scale upwards by a factor of $3^{k_r-1}$, we obtain a contradiction with the lower bound of $\Lambda(B(S,r'), r')$ provided by Lemma \ref{SnoflakeUnitScale}.

We proceed now to obtain the upper bound using an extension of the technique from Lemma \ref{SnoflakeUnitScale}. We place rotated copies of $\partial ([0,3^{-k_r}]\times [0,3^{-(k_r+1)}])$ on the $4^{k_r}$ copies of the snowflake of size $3^{-k_r}$ that form $S$ as in Figure \ref{fig: rectangles}. 
Finally, we notice that the union of the boundary of such rectangles gives us a connected set $\Gamma$ whose $\clH^1$ measure is less than $\displaystyle 3 (4/3)^{k_r}$ and satisfies that $B(S,r) \subset B(\Gamma,r)$.
\end{proof}

\begin{figure}[h]
    \centering
    \includegraphics[scale=.75]{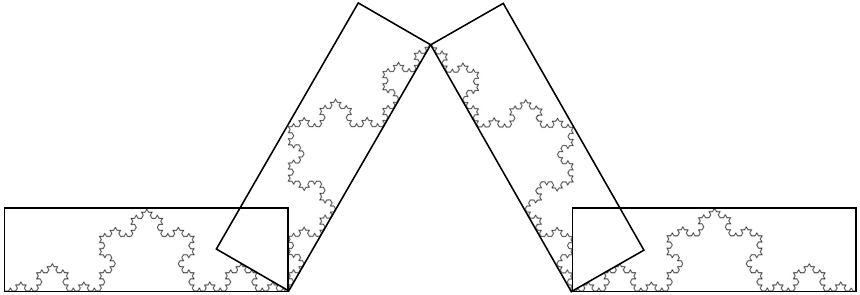}
    \caption{$\partial ([0,3^{-k_r}]\times [0,3^{-(k_r+1)}])$ curves around copies of scale $3^{-k_r}$ snowflakes.}
    \label{fig: rectangles}
\end{figure}

\begin{cor}\label{cor: snowflake corollary}
Let $S$ be the $\frac{1}{3}$-von Koch snowflake.
There exists a constant $C > 1$ such that
\[
\frac{1}{C}r^{\frac{\alpha-1}{\alpha}} \leq \Lambda(B(S, r), r) \leq C r^{\frac{\alpha-1}{\alpha}},
\]
for all $0<r < 1$ where $\alpha=\log_4(3)$.
\end{cor}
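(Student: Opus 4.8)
The plan is to combine the multiscale estimate of Lemma~\ref{lem: snowflake} with the unit-scale bounds of Lemma~\ref{SnoflakeUnitScale}, reducing everything to an elementary computation that converts the geometric factor $(4/3)^{k_r}$ into the power $r^{(\alpha-1)/\alpha}$.

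First I would dispose of the large-scale regime. For $\frac13 \le r < 1$, Lemma~\ref{SnoflakeUnitScale} already gives $1 \le \Lambda(B(S,r),r) \le 3$. Since the exponent $\frac{\alpha-1}{\alpha}$ is negative (because $\alpha = \log_4 3 < 1$) and $r$ ranges over the interval $[\frac13, 1)$ bounded away from $0$, the quantity $r^{(\alpha-1)/\alpha}$ is pinched between two positive constants depending only on $\alpha$. Hence the claimed two-sided bound holds on this interval after enlarging $C$.

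Next, for $0 < r < \frac13$ I would invoke Claim~\ref{claim:k_r} to write $r' = 3^{k_r} r$ with $\frac13 \le r' < 1$ and $k_r \in \mathbb{Z}^+$. Applying Lemma~\ref{SnoflakeUnitScale} to $r'$ gives $1 \le \Lambda(B(S,r'),r') \le 3$, so Lemma~\ref{lem: snowflake} collapses to
\[
\frac{1}{C}\Big(\tfrac43\Big)^{k_r} \le \Lambda(B(S,r),r) \le 3C\,\Big(\tfrac43\Big)^{k_r}.
\]
The key computation is to rewrite $(4/3)^{k_r}$ as a power of $r$. Taking logarithms base $3$, using $3^{k_r} = r'/r$ together with $\log_3 4 = 1/\alpha$, one finds
\[
\Big(\tfrac43\Big)^{k_r} = \Big(\tfrac{r'}{r}\Big)^{\frac{1-\alpha}{\alpha}} = (r')^{\frac{1-\alpha}{\alpha}}\, r^{\frac{\alpha-1}{\alpha}}.
\]
Since $\frac13 \le r' < 1$ and $\frac{1-\alpha}{\alpha} > 0$, the factor $(r')^{(1-\alpha)/\alpha}$ lies in a fixed bounded positive range, so $(4/3)^{k_r} \asymp r^{(\alpha-1)/\alpha}$ with constants depending only on $\alpha$. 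Substituting back yields the corollary on $(0,\frac13)$, and combining with the large-scale case completes the proof.

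I do not expect any genuine obstacle here: all the analytic content is already carried by Lemmas~\ref{SnoflakeUnitScale} and~\ref{lem: snowflake}. The only point requiring care is the exponent bookkeeping identity $\log_3 4 = 1/\alpha$ (equivalently $\alpha = \log_4 3$), which is precisely what makes the growth rate $(4/3)^{k_r}$ of the covering match the H\"older exponent and produce the clean power $r^{(\alpha-1)/\alpha}$. Getting the sign of the exponent right—it is negative, reflecting the blow-up of $\Lambda$ as $r \to 0^+$—is the one place an error could slip in.
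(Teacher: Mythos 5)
Your proposal is correct and takes essentially the same route as the paper: both reduce to $0<r<\tfrac13$, combine Lemma~\ref{lem: snowflake} with the unit-scale bounds of Lemma~\ref{SnoflakeUnitScale}, and convert $(4/3)^{k_r}$ into $r^{(\alpha-1)/\alpha}$ using $\log_3 4 = 1/\alpha$. The only cosmetic difference is bookkeeping: you exploit $3^{k_r}=r'/r$ and absorb the bounded factor $(r')^{(1-\alpha)/\alpha}$ into the constant, while the paper writes $k_r=\lceil -\log_3(r)-1\rceil$ and removes the ceiling by enlarging $C$.
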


\begin{proof}
It is easy to see that we only have to consider $0<r<\frac{1}{3}$. First, we apply Lemma \ref{SnoflakeUnitScale} to rewrite the result of Lemma \ref{lem: snowflake} as 
$$\frac{1}{C}\left(\frac{4}{3}\right)^{k_r} \leq \Lambda(B(S, r), r) \leq C \left(\frac{4}{3}\right)^{k_r}.$$ 
Next, we write $k(r)=-\log_3(r)-1$ and notice that \[-\log_3(r)-1\le\lceil k(r)\rceil \le -\log_3(r),\]
so that 
\[\frac{1}{3r}\le 3^{\lceil k(r)\rceil}\le \frac{1}{r}.\]
This implies that $\lceil k(r)\rceil$ is the integer $k_r$ specified in Claim \ref{claim:k_r}.
Lastly, let us show that we may remove the ceiling function from the definition of $k(r)$.
If $a=\frac{4}{3}$, letting $C' = Ca$ gives us:
\[
\frac{1}{C'}a^{k(r)} = \frac{1}{C}a^{k(r) - 1} \leq \frac{1}{C}a^{\ceil{k(r)}} \leq  \Lambda(B(S, r), r) \leq Ca^{\ceil{k(r)}} \leq  Ca^{k(r) + 1} = C'a^{k(r)}. 
\]
Equivalently,
\[
\frac{1}{C''} a^{-\log_3(r)} \leq  \Lambda(B(S, r), r) \leq C''a^{-\log_3(r)} 
\]
where $C'' := C'a$.
Lastly, elementary algebra shows that 
$$\Big(\frac{4}{3}\Big)^{-\log_3(r)}=r^{\frac{\alpha-1}{\alpha}}, \text{ with }\alpha=\log_4(3),$$
concluding the proof of the corollary.\end{proof}

\subsection{Measure bounds in the General bi-H\"older curves}
We now generalize the results for the  von Koch snowflake to arbitrary $(\alpha, \beta)$-bi-H\"older curves. For the reader's convenience, we restate the result that we set out to prove.  The theorem then will swiftly follow from two subsequent lemmas, the first of which (Lemma \ref{lem: lower bound}) establishes the lower bound and the second of which (Lemma \ref{lem: upper bound}) establishes the upper bound.

\begin{thm}\label{thm: general case}
Let $0 < \alpha \leq \beta$ and let $\gamma:[0,1]\to \bbR^n$ be a $(\alpha,\beta)$-bi-H\"older curve with constant $1\leq C_\gamma < \infty$. 
Then there exists $C=C(\alpha, \beta, C_\gamma,n)$ such that 
\[
\frac{1}{C} r^{\frac{\beta-1}{\beta}} \leq \Lambda(B(\widehat\gamma,r), r) \leq C r^{\frac{\alpha-1}{\alpha}},
\]
for all small enough $r = r(\beta,C_\gamma) > 0$.
\end{thm}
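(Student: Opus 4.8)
The strategy is to mirror the snowflake argument of Lemma~\ref{lem: snowflake} and Corollary~\ref{cor: snowflake corollary}, but now feeding in the two-sided H\"older control separately: the upper bound $|\gamma(x)-\gamma(y)|\le C_\gamma|x-y|^\alpha$ governs how efficiently a curve can cover $\widehat\gamma$ (giving the $r^{(\alpha-1)/\alpha}$ upper estimate), while the lower bound $\frac{1}{C_\gamma}|x-y|^\beta\le|\gamma(x)-\gamma(y)|$ forces $\widehat\gamma$ to be ``spread out'' and hence expensive to cover (giving the $r^{(\beta-1)/\beta}$ lower estimate). I would isolate these as two lemmas, exactly as the text announces with Lemma~\ref{lem: lower bound} and Lemma~\ref{lem: upper bound}, and then combine them to get Theorem~\ref{thm: general case}.

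\medskip

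\textbf{Upper bound.} First I would partition $[0,1]$ into $N$ intervals of length $1/N$ with $N\approx r^{-1/\alpha}$. On each subinterval the H\"older upper bound gives $\diam\gamma([\tfrac{k}{N},\tfrac{k+1}{N}])\le C_\gamma N^{-\alpha}$, which is comparable to $r$ by the choice of $N$. Thus each arc of $\widehat\gamma$ sits inside a ball of radius $\lesssim r$ about its starting point $\gamma(k/N)$, and the polygonal curve $\Gamma$ through the sample points $\gamma(0),\gamma(1/N),\dots,\gamma(1)$ satisfies $B(\widehat\gamma,r)\subset B(\Gamma,Cr)$ after adjusting the constant; rescaling $r$ by an absolute factor (absorbed into $C$) fixes the neighborhood. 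The length of $\Gamma$ is at most $\sum_{k}|\gamma(\tfrac{k+1}{N})-\gamma(\tfrac kN)|\le N\cdot C_\gamma N^{-\alpha}=C_\gamma N^{1-\alpha}\approx r^{(\alpha-1)/\alpha}$, giving the claimed $\Lambda(B(\widehat\gamma,r),r)\le Cr^{(\alpha-1)/\alpha}$. This is the routine direction, modeled on the rectangle-covering step in Lemma~\ref{lem: snowflake}.

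\medskip

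\textbf{Lower bound.} This is the genuinely harder half, the analogue of the self-similar separation argument in Lemma~\ref{lem: snowflake}. Having no self-similarity to exploit, I would replace it with a direct separated-net construction driven by the lower H\"older bound. Choose parameters $t_0<t_1<\cdots<t_M$ in $[0,1]$ so that consecutive sample points $p_i=\gamma(t_i)$ are pairwise separated: using $\frac{1}{C_\gamma}|t_i-t_j|^\beta\le|p_i-p_j|$, spacing the $t_i$ by $\delta\approx(C_\gamma r)^{1/\beta}$ forces $|p_i-p_j|\gtrsim r$ whenever $|i-j|\ge$ const, so one extracts $M\approx r^{-1/\beta}$ points that are mutually $\gtrsim r$-separated. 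Any competitor $\Gamma$ with $B(\Gamma,r)\supset B(\widehat\gamma,r)$ must pass within $Cr$ of each $p_i$; assigning to each $p_i$ the portion $\Gamma_{p_i}=\{x\in\Gamma:B(x,r)\cap B(p_i,\rho)\neq\varnothing\}$ (for suitable $\rho$) and arguing, as in Claim~\ref{clm:no_overlap}, that these pieces are disjoint when the underlying balls are separated, one gets $\clH^1(\Gamma)\ge\sum_i\clH^1(\Gamma_{p_i})\gtrsim (M-1)\cdot r\approx r^{(1-\beta)/\beta}\cdot r = r^{(\beta-1)/\beta}$ --- where the per-piece length is bounded below because $\Gamma$ must travel between consecutive separated balls, each transit costing order $r$.

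\medskip

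\textbf{Main obstacle.} The delicate point is the disjointness-and-lower-length bookkeeping in the lower bound: the pieces $\Gamma_{p_i}$ need not be disjoint unless the points $p_i$ are separated by more than $2r$, and one must simultaneously guarantee that each retained piece carries length $\gtrsim r$ rather than merely being nonempty. In the snowflake case self-similarity and the explicit geometry made Claim~\ref{clm:no_overlap} clean; here I expect to lose uniform constants and must track how $C_\gamma$, $\alpha$, $\beta$, and the dimension $n$ enter, which is why the final constant is $C=C(\alpha,\beta,C_\gamma,n)$ and the estimate holds only for $r=r(\beta,C_\gamma)$ small enough (the threshold being where the separation $\delta\approx(C_\gamma r)^{1/\beta}$ is small enough to lie safely inside $[0,1]$ and to make consecutive balls genuinely separated). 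The connectedness of $\Gamma$, used to force transits between separated balls, plays the role that Golab's theorem and the group-of-four structure played in the snowflake proof.
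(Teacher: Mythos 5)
Your proposal correctly identifies the two-lemma structure (Lemma~\ref{lem: lower bound} and Lemma~\ref{lem: upper bound}), but the two halves fare very differently: your lower bound is sound (and in fact simpler than the paper's), while your upper bound has a genuine gap. The problem $\Lambda(B(\widehat\gamma,r),r)$ demands a competitor $\Gamma$ whose $r$-neighborhood, \emph{with the same} $r$, contains $B(\widehat\gamma,r)$, and the polygonal curve through the sample points $\gamma(k/N)$ never achieves this unless $\gamma$ is a union of segments. Concretely, if an arc of $\gamma$ between consecutive samples bulges away from its chord (take $\gamma$ a circular arc, which is bi-Lipschitz and hence bi-H\"older), pick a non-vertex point $a$ on the arc and let $z=a+r\nu$ with $\nu$ the outward normal: then $\dist(z,\widehat\gamma)=r$, so $z\in B(\widehat\gamma,r)$, but the closed ball $B(z,r)$ touches $\widehat\gamma$ only at $a$ and the chords lie strictly on the other side of the arc, so $\dist(z,\Gamma)>r$. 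This persists no matter how fine the sampling. Your proposed repair --- ``rescaling $r$ by an absolute factor (absorbed into $C$)'' --- cannot work, because the constant $C$ multiplies the \emph{length} bound while the deficiency is in the \emph{covering radius}, which is pinned to $r$ by the statement; running the construction at any radius $r'=\lambda r$ still yields a curve covering $B(\widehat\gamma,r)$ only within some radius strictly larger than $r$.

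This is precisely why the paper's Lemma~\ref{lem: upper bound} does not use a polygonal curve: it places at each sample point $\gamma(t_i)$ a \emph{circle} $C_i$ of radius $r/2$, sampled finely enough that every point of $\widehat\gamma$ lies within $r/10$ of some $\gamma(t_i)$. In the plane, $B(C_i,r)$ contains the full ball $B(\gamma(t_i),\tfrac{3}{2}r)$, so the extra $r/2$ of slack absorbs the $11r/10$ error and the exact $r$-neighborhood of $\bigcup_i C_i$ really does contain $B(\widehat\gamma,r)$; since each circle has length $\pi r$ and there are $\approx C_\gamma^{1/\alpha}r^{-1/\alpha}$ of them, the total length is still $\lesssim r^{(\alpha-1)/\alpha}$. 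Moreover, in $\bbR^n$ with $n\geq 3$ a single circle loses this fattening property (points on its axis at height greater than $\sqrt{3}r/2$ are farther than $r$ from it), so the paper needs $C(n)$ circles in different planes per sample point (Lemma~\ref{lem: appendix}); this is exactly where the dimension dependence of $C$ enters --- a dependence your dimension-free construction could never produce, which is itself a signal that something is missing.

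Your lower bound, by contrast, is a legitimate and genuinely different route. Spacing the parameters by $\delta\approx (C_\gamma r)^{1/\beta}$ makes \emph{all} pairs $p_i=\gamma(t_i)$ (not just consecutive ones) separated by $\gtrsim r$, giving $M\approx_{\beta,C_\gamma} r^{-1/\beta}$ points; any competitor contains points $y_i\in B(p_i,r)$, the balls $B(p_i,5r)$ are disjoint, and since the connected set $\Gamma$ meets each such ball near its center and also exits it (it must reach the other $y_j$), each contributes $\clH^1(\Gamma\cap B(p_i,5r))\gtrsim r$, whence $\clH^1(\Gamma)\gtrsim Mr\approx r^{(\beta-1)/\beta}$. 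The one imprecision is your phrase about ``transits between consecutive separated balls'': no ordering of visits is available or needed; the correct tool is the standard fact that a continuum meeting both the center and the complement of a ball has length at least comparable to the radius inside that ball. The paper's Lemma~\ref{lem: lower bound} instead argues by contradiction: it groups parameter intervals, extracts disjoint ``cores'' via the lower H\"older bound, pigeonholes to find a core whose $r$-neighborhood meets the competitor in small measure, and then blows that core up to unit scale to contradict the diameter bound of Lemma~\ref{lem:diam estimate}. Your direct net-counting argument avoids the rescaling step and the attendant bookkeeping entirely, at no loss of generality; if you patch the upper bound with the circle construction, the combined proof is complete.
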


As a consequence of this, we obtain the following important corollary, estimating the asymptotic behavior for the values 
\begin{equation}\label{parameterized MDP}
\widehat{\Lambda}(E, r) := \inf\{\ell(\gamma) : \gamma \in \clC^{0, 1}([0, 1]) \text{ such that } B(\widehat{\gamma}, r) \supset E\}.
\end{equation}
The problem above is a variation of the MDP, where we minimize the \emph{length} of parameterizations rather than the $\mathcal{H}^1$ measure of their images.
The \emph{length} $\ell(\gamma)$ of a curve $\gamma$ is defined by 
$$\ell(\gamma) := \sup \sum_{k = 1}^N |\gamma(t_k) - \gamma(t_{k-1})|,$$ where the supremum is taken over all partitions $a = t_0 < t_1 < \cdots < t_{N-1} < t_N = b$ of $[a, b]$. 

\begin{cor}{\label{cor: general case}}
Let $0 < \alpha \leq \beta$ and let $\gamma:[0,1]\to \bbR^n$ be a $(\alpha,\beta)$-bi-H\"older curve with constant $1\leq C_\gamma < \infty$. 
Then there exists $C=C(\alpha, \beta, C_\gamma,n)$ such that 
\[
\frac{1}{C} r^{\frac{\beta-1}{\beta}} \leq \widehat{\Lambda}(B(\widehat\gamma,r), r) \leq C r^{\frac{\alpha-1}{\alpha}},
\]
for all small enough $r = r(\beta,C_\gamma) > 0$.
\end{cor}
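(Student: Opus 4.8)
The plan is to show that the length functional $\widehat\Lambda$ is comparable to the measure functional $\Lambda$ up to an absolute multiplicative constant, after which the corollary becomes an immediate consequence of Theorem~\ref{thm: general case}. The whole argument is a soft comparison of the two infima, with no new geometric analysis of bi-H\"older curves required.

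First I would establish the trivial inequality $\Lambda(E,r) \leq \widehat\Lambda(E,r)$ for every compact set $E$ and every $r>0$. Indeed, if $\gamma \in \clC^{0,1}([0,1])$ satisfies $B(\widehat\gamma, r) \supset E$, then $\widehat\gamma$ is a rectifiable curve with $B(\widehat\gamma, r)\supset E$, and since the length of a parameterization always dominates the $\clH^1$ measure of its image, $\clH^1(\widehat\gamma) \leq \ell(\gamma)$. Taking the infimum over all such $\gamma$ yields $\Lambda(E,r) \leq \widehat\Lambda(E,r)$. Applying this with $E = B(\widehat\gamma, r)$, together with the lower bound of Theorem~\ref{thm: general case}, immediately gives the left-hand inequality $\widehat\Lambda(B(\widehat\gamma,r),r)\ge \frac{1}{C}r^{\frac{\beta-1}{\beta}}$ of the corollary.

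For the reverse comparison I would prove $\widehat\Lambda(E,r) \leq 2\,\Lambda(E,r)$. Let $\Gamma$ be any rectifiable curve with $B(\Gamma, r) \supset E$; since $\Gamma$ is a compact connected set of finite $\clH^1$ measure, it admits a surjective Lipschitz parameterization $\gamma:[0,1]\to\Gamma$ with $\ell(\gamma) \leq 2\,\clH^1(\Gamma)$ (the standard doubling/Euler-tour argument for traversing a continuum of finite length, after a linear reparameterization to the domain $[0,1]$). Then $\widehat\gamma = \Gamma$, so $\gamma$ is admissible for $\widehat\Lambda(E,r)$ and hence $\widehat\Lambda(E,r) \leq \ell(\gamma) \leq 2\,\clH^1(\Gamma)$. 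Taking the infimum over all admissible $\Gamma$ gives $\widehat\Lambda(E,r) \leq 2\,\Lambda(E,r)$. Combining this with the upper bound of Theorem~\ref{thm: general case} applied to $E = B(\widehat\gamma, r)$ produces $\widehat\Lambda(B(\widehat\gamma, r), r) \leq 2C\, r^{\frac{\alpha-1}{\alpha}}$, and absorbing the factor $2$ into the constant finishes the right-hand inequality.

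The only nontrivial ingredient is the traversal bound $\ell(\gamma)\le 2\,\clH^1(\Gamma)$ for a connected rectifiable set, and I expect this to be the main (though entirely standard) point requiring a citation; everything else is a comparison of infima. In fact the same reasoning gives $\Lambda \le \widehat\Lambda \le 2\Lambda$ in full generality, so the content of the corollary is simply that replacing the $\clH^1$-measure of images by the length of parameterizations changes the quantity by at most a bounded factor, which is invisible at the level of the orders of magnitude $r^{\frac{\beta-1}{\beta}}$ and $r^{\frac{\alpha-1}{\alpha}}$.
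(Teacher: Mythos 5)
Your proposal is correct and matches the paper's (implicit) derivation: the paper presents Corollary \ref{cor: general case} as an immediate consequence of Theorem \ref{thm: general case}, and the bridge you supply --- $\Lambda(E,r)\le \widehat{\Lambda}(E,r)\le 2\,\Lambda(E,r)$, using that a length of a parameterization dominates the $\clH^1$ measure of its image and, conversely, that any compact connected set of finite $\clH^1$ measure admits a surjective Lipschitz parameterization of length at most twice its measure --- is exactly the intended soft comparison. As a minor alternative, the factor-$2$ traversal theorem could be avoided for the upper bound, since the competitor constructed in Lemma \ref{lem: upper bound} is an explicit union of overlapping circles, which can be parameterized directly with length comparable to its $\clH^1$ measure.
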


\begin{remark}
The existence of solutions to (\ref{parameterized MDP}), i.e., the existence of a Lipschitz curve $\gamma^\ast : [0, 1] \to \bbR^n$ such that $B(\widehat{\gamma^\ast}, r) \supset E$ and $\ell(\gamma^\ast) = \widehat{\Lambda}(E, r)$ can be proven with the Arzela-Ascoli theorem together with the fact that $\ell$ is lower-semicontinuous with respect to pointwise convergence. 
\end{remark}

We proceed to explore the proof of Theorem \ref{thm: general case}. Analogously to when we investigated the snowflake, it turns out to be  useful to have an appropriate scale of detail at which to view a H\"older curve. For that purpose, we develop the following definition.

Let $\eta >0$. For $0<r<1$, we define the \textit{$r$-appropriate scale} $k_\eta(r)$ as
$$k_\eta(r) := - \log_{2^\eta}(r).$$

\begin{lem}\label{lem: lower bound}
Let $0 < \alpha \leq \beta$ and let $\gamma:[0,1]\to \bbR^n$ be a $(\alpha,\beta)$-bi-H\"older curve with constant $1\leq C_\gamma < \infty$. Then there exists  $C=C(\beta,C_\gamma)>0$ such that 
    \begin{equation} \label{lowerbound}
    \Lambda(B(\widehat\gamma,r),r) \geq C r^{\frac{\beta-1}{\beta}},
    \end{equation}
    for all small enough $r=r(\beta, C_\gamma)>0$. 
\end{lem}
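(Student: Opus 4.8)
The plan is to bound $\Lambda(B(\widehat\gamma,r),r)$ from below by showing that \emph{every} rectifiable curve $\Gamma$ with $B(\Gamma,r)\supset B(\widehat\gamma,r)$ satisfies $\clH^1(\Gamma)\geq C\,r^{(\beta-1)/\beta}$; taking the infimum over all such competitors then yields the lemma. The whole argument rests on comparing two estimates for the $n$-dimensional Lebesgue measure of the neighborhood $B(\Gamma,r)$: a lower estimate coming from the fact that $\widehat\gamma$ is forced to be ``spread out'' by the lower bi-H\"older inequality, and the standard upper (Minkowski-content) estimate for the neighborhood of a curve, which is linear in its length. This is the quantitative form of the ``disjoint copies'' counting used for the snowflake, but it sidesteps self-similarity, which general bi-H\"older curves lack.

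For the lower estimate I would first fix the appropriate scale. Setting $\eta=\beta$ in the definition of $k_\eta(r)$, I take $N$ to be the integer part of a fixed multiple of $2^{k_\beta(r)}=r^{-1/\beta}$, chosen so that $\tfrac{1}{C_\gamma}N^{-\beta}>2r$; concretely $N\asymp (C_\gamma r)^{-1/\beta}$, which is $\geq 1$ once $r$ is small depending on $\beta$ and $C_\gamma$. Consider the points $\gamma(i/N)$, $i=0,\dots,N$. For $i\neq j$ the lower bi-H\"older bound gives $|\gamma(i/N)-\gamma(j/N)|\geq \tfrac{1}{C_\gamma}|i-j|^\beta N^{-\beta}\geq \tfrac{1}{C_\gamma}N^{-\beta}>2r$, so the closed balls $B(\gamma(i/N),r)$ are pairwise disjoint. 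Since each $\gamma(i/N)\in\widehat\gamma$, these balls lie inside $B(\widehat\gamma,r)\subset B(\Gamma,r)$, and hence
\[
\mathcal{L}^n\big(B(\Gamma,r)\big)\;\geq\;(N+1)\,c_n r^n\;\geq\;c_n'\,r^{\,n-1/\beta},
\]
where $c_n$ is the volume of the unit ball and $c_n'$ depends on $n,\beta,C_\gamma$.

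For the upper estimate I would use that a connected set $\Gamma$ with $\clH^1(\Gamma)<\infty$ admits a Lipschitz parametrization of length at most $2\clH^1(\Gamma)$, so that $\Gamma$ is covered by at most $\lceil 2\clH^1(\Gamma)/r\rceil+1$ balls of radius $r$; enlarging radii to $2r$ covers $B(\Gamma,r)$ and gives the Minkowski-content bound
\[
\mathcal{L}^n\big(B(\Gamma,r)\big)\;\leq\;C_n\big(\clH^1(\Gamma)\,r^{\,n-1}+r^{\,n}\big).
\]
Combining the two displays yields $c_n'\,r^{\,n-1/\beta}\leq C_n(\clH^1(\Gamma)r^{\,n-1}+r^{\,n})$, i.e.
\[
\clH^1(\Gamma)\;\geq\;\tfrac{c_n'}{C_n}\,r^{\,1-1/\beta}-r .
\]
Because $1-1/\beta<1$, for all small $r=r(\beta,C_\gamma)$ the subtracted $r$ is negligible against $r^{\,1-1/\beta}$, and we conclude $\clH^1(\Gamma)\geq C\,r^{(\beta-1)/\beta}$, as desired. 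Specializing $\beta=\alpha$ recovers the lower bound of Corollary~\ref{cor: snowflake corollary}.

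I expect the main obstacle to be two bookkeeping points rather than a conceptual one. First, one must pin down the multiplicative constant defining $N$ together with the threshold for ``small $r$'' so that simultaneously $N\geq 1$, the separation $\tfrac1{C_\gamma}N^{-\beta}>2r$ is strict, and the $r^n$ term is absorbed, with all thresholds controlled by $\beta$ and $C_\gamma$ (and the dimension). Second, and more essentially, the upper estimate must be applied to $\clH^1(\Gamma)$ — the measure of the \emph{image} — rather than to the length of an a priori parametrization; this is exactly where the continuum-parametrization fact (length $\leq 2\clH^1$) and, with it, the connectivity of the competitor $\Gamma$ enter. The constant genuinely involves $n$ through the volume comparison $c_n'/C_n\asymp 2^{-n}$, which is harmless since Theorem~\ref{thm: general case} already permits dependence on $n$.
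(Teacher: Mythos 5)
Your proof is correct, but it takes a genuinely different route from the paper's. You argue directly: the lower bi-H\"older inequality forces the $N+1\asymp (C_\gamma r)^{-1/\beta}$ points $\gamma(i/N)$ to be $2r$-separated, so any competitor $\Gamma$ with $B(\Gamma,r)\supset B(\widehat\gamma,r)$ has a neighborhood containing $N+1$ disjoint $r$-balls, and comparing this volume lower bound with the Minkowski-type upper bound $\mathcal{L}^n(B(\Gamma,r))\lesssim \clH^1(\Gamma)\,r^{n-1}+r^n$ (which indeed requires the classical fact that a compact connected set of finite $\clH^1$ measure can be traversed by a Lipschitz parametrization of length at most $2\clH^1$, so connectivity of competitors is used exactly where you say it is) yields the bound for every competitor at once. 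The paper instead argues by contradiction and pigeonholing: it partitions $[0,1]$ at scale $r^{1/\beta}$, uses the lower bi-H\"older bound to produce disjoint ``cores'' whose $r$-neighborhoods meet disjoint pieces of the hypothetical short competitor, pigeonholes to find one piece of anomalously small measure, and then rescales that core and invokes the diameter bound (Lemma \ref{lem:diam estimate}) to reach a contradiction. Both proofs hinge on the same geometric input---the lower bi-H\"older inequality manufactures $\asymp r^{-1/\beta}$ many $2r$-separated landmarks---but they differ in what they deliver: your volume comparison loses a factor on the order of $2^{-n}$, so your constant and smallness threshold depend on $n$, whereas the paper's rescaling argument is dimension-free and gives $C=C(\beta,C_\gamma)$ exactly as the lemma states; this is harmless for Theorem \ref{thm: general case} (which allows $n$-dependence) but is strictly a slightly weaker conclusion than the lemma claims. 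If you wanted to remove the $n$-dependence, note that you could replace the volume count by a length count: demanding separation $4r$ instead of $2r$, each landmark has a point of $\Gamma$ within $r$, these nearby points are pairwise $2r$-separated, and connectedness of $\Gamma$ gives $\clH^1(\Gamma\cap B(z,r))\geq r$ around each such point $z$ (since $\Gamma$ must exit the ball), whence $\clH^1(\Gamma)\gtrsim Nr\gtrsim C_\gamma^{-1/\beta}r^{(\beta-1)/\beta}$ with no volume comparison at all. What your approach buys is directness and simplicity: there is no contradiction setup, no pigeonholing, and no rescaling step, each estimate being a standard packing/covering count.
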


\begin{proof}

Assume that the conclusion (\ref{lowerbound}) does not hold.  That is, assume that there exists a $(\alpha,\beta)$-bi-H\"older curve $\gamma:[0,1] \rightarrow \mathbb{R}^n$ with constant $C_\gamma \geq 1$ and a curve $\varphi: [0,1] \to \bbR^n$ satisfying $B(\widehat\gamma,r) \subset B(\widehat\varphi,r)$ and
$$ \mathcal{H}^1(\widehat\varphi)<\frac{1}{L}C_{\gamma}^{-1/\beta} r^{\frac{\beta-1}{\beta}}=\frac{1}{L} C_{\gamma}^{-1/\beta} 2^{k_\beta(r)(1-\beta)},$$
for some $L>0$ large enough to be determined later.
We start by splitting the domain of $\gamma$ by setting
    $$  t_i= \left( 2^{k_\beta(r)}  C_{\gamma}^{-1/\beta}\right)^{-1}i, \text{ for } i=0,...,\lfloor C_{\gamma}^{-1/\beta}2^{k_\beta(r)}\rfloor.$$ 
Next, we take the union of $m$ consecutive intervals, where $m\geq 1$ will be determined later, and we denote the images of the unions as $$A_l=\gamma([t_{(l-1)m},t_{lm}]), \text{ for }l=1,...,\lfloor \frac{ 2^{k_\beta(r)}  C_{\gamma}^{-1/\beta}}{m} \rfloor.$$
   
Additionally, we define cores of the images as 
$$\widetilde{A}_l=\gamma\Big(\Big[\frac{2t_{(l-1)m}+t_{lm}}{3}, \frac{t_{(l-1)m}+2t_{lm}}{3}\Big]\Big).$$

For any $t \in [0,1] \setminus [t_{(l-1)m},t_{lm}]$, we have that 
\begin{equation}\label{distance}
\dist(\gamma(t), \widetilde{A}_l)\geq \frac{1}{3^{\beta}} 2^{-k_\beta(r)\beta} m^\beta >2r,
\end{equation}
by assuming $m$ to be large enough. Notice that, due to the definition of the $r$-appropriate scale, $m$ is chosen only in terms of $\beta$. Moreover, at this time we assume that $r$ is small enough so that 
\begin{equation}\label{r-restriction}
    C_\gamma^{-1/\beta} 2^{k_\beta(r)} > 2m.
\end{equation}
As consequence of (\ref{distance}), we obtain that the sets $\widetilde{A}_l$ are disjoint with the uniform lower bound of $2r$ on the separation distance. 

Next, we define sets $A'_l=\widehat\varphi\cap B(\widetilde{A}_l,r).$
Due to equation (\ref{distance}),  we have once more that the sets $A'_l$ are disjoint, and so we obtain that 
$$ \mathcal{H}^1(\widehat\varphi)\geq \sum_{l=1}^{\lfloor\frac{2^{k_\beta(r)}C_{\gamma}^{-1/\beta}}{ m}\rfloor} \mathcal{H}^1(A'_l).$$
Using (\ref{r-restriction}), we see that there exists $l_0$ such that 
    $$ \mathcal{H}^1(A'_{l_0})<  2\frac{m}{2^{k_\beta(r)} C_{\gamma}^{-1/\beta}}\frac{1}{L} C_{\gamma}^{-1/\beta} 2^{k_\beta(r)(1-\beta)}  =\frac{2m2^{-k_\beta(r) \beta}}{L}.$$
    
    Let $\psi: \R \to \R$ be the affine function mapping from  $[0,1]$ to $$\displaystyle I:=\Big[\frac{2t_{(l_0-1)m}+t_{l_0m}}{3}, \frac{t_{(l_0-1)m}+2t_{l_0m}}{3}\Big].$$
Next, we define the curve $\gamma_0: [0,1] \to \mathbb{R}^n $ as $\gamma_0=|I|^{-\beta}(\gamma\circ \psi  )$. We notice that $\gamma_0$ is also a weak $(\alpha,\beta)$-H\"older curve with constant $C_\gamma \geq 1$. Analogously, we define a curve $\varphi_0$ such that $\widehat{\varphi_0}=|I|^{-\beta} A'_{l_0}$. Of course,
we have that $ B(\widehat\gamma_0,|I|^{-\beta}r) \subset B(\widehat\varphi_0,|I|^{-\beta}r) $ and that 
$$ \mathcal{H}^1(\widehat{\varphi_0})<  \frac{2\cdot 3^\beta m^{1-\beta}C_\gamma^{-1}}{L} . $$
However, since $\diam(\widehat{\gamma_0})\geq C_\gamma^{-1}$, Lemma \ref{lem:diam estimate} yields that  $$\Lambda(B(\widehat{\gamma_0},|I|^{-\beta}r),|I|^{-\beta}r)\geq C^{-1}_\gamma.$$ Therefore, picking $L$ big enough only terms in of $m$ (which was picked in terms of $\beta$), we obtain a contradiction. 
\end{proof}

Next, we obtain the corresponding upper bound. 
\begin{lem}\label{lem: upper bound}
Let $0 < \alpha \leq \beta$ and let $\gamma:[0,1]\to \bbR^n$ be a \textit{weak} $(\alpha,\beta)$-bi-H\"older curve with constant $1\leq C_\gamma < \infty$. Then there exists  $C=C(\alpha, C_\gamma,n)>0$ such that
    $$\Lambda(B(\widehat\gamma,r),r) \leq C r^{\frac{\alpha-1}{\alpha}}, $$
    for all $0<r<1$.
\end{lem}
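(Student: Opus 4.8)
The plan is to construct, for each $0<r<1$, an explicit connected competitor $\Gamma$ satisfying $B(\widehat\gamma,r)\subset B(\Gamma,r)$ together with $\clH^1(\Gamma)\le C(\alpha,C_\gamma,n)\,r^{\frac{\alpha-1}{\alpha}}$; this is the higher-dimensional analogue of the thin-rectangle construction used for the upper bound in Lemma~\ref{lem: snowflake}. Note that only the upper Hölder estimate $|\gamma(x)-\gamma(y)|\le C_\gamma|x-y|^\alpha$ will be needed, which is consistent with the constant depending on $\alpha$ but not on $\beta$; in particular the ``weak'' hypothesis plays no role in this bound.

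First I would fix the scale of the partition. Set $N:=\lceil C_\gamma^{1/\alpha}r^{-1/\alpha}\rceil$ and $t_i:=i/N$ for $i=0,\dots,N$, and write $\gamma_i:=\gamma([t_{i-1},t_i])$, so that $\widehat\gamma=\bigcup_{i=1}^N\gamma_i$. The choice of $N$ is made precisely so that the upper Hölder bound forces $\diam(\gamma_i)\le C_\gamma N^{-\alpha}\le r$ for every $i$. Fixing the left endpoint $x_i:=\gamma(t_{i-1})\in\gamma_i$, this gives $\gamma_i\subset B(x_i,r)$ and hence $B(\gamma_i,r)\subset B(x_i,2r)$.

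The core step is to replace each blob $\gamma_i$ by a short connected curve whose $r$-neighborhood already contains $B(\gamma_i,r)$. It is crucial here to cover with radius exactly $r$ and not $2r$: simply joining the representative points $\gamma(t_i)$ by a polygonal path would place every point of $\gamma_i$ within $r$ of the path, but would only yield $B(\widehat\gamma,r)\subset B(\text{path},2r)$, which is too weak. Instead, for each $i$ I would take a connected one-dimensional set $\Gamma_i$ that is $r$-dense in $B(x_i,2r)$, meaning $B(x_i,2r)\subset B(\Gamma_i,r)$, realized as a spanning tree of segments on a grid of spacing $\asymp r/\sqrt n$ inside the ball. A packing count shows the number of grid points is bounded by a constant $C(n)$, so $\Gamma_i$ can be chosen with $\clH^1(\Gamma_i)\le C(n)\,r$, and by construction $B(\gamma_i,r)\subset B(x_i,2r)\subset B(\Gamma_i,r)$.

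Finally I would assemble and estimate. Since consecutive pieces share the point $\gamma(t_i)$, which lies in $B(x_i,2r)$ and is the center of $B(x_{i+1},2r)$, the nets $\Gamma_i$ and $\Gamma_{i+1}$ each contain a point within $r$ of $\gamma(t_i)$; joining these by a segment of length at most $2r$ makes $\Gamma:=\bigcup_i\Gamma_i\cup(\text{connectors})$ a connected finite union of segments, hence a rectifiable curve. Taking $r$-neighborhoods, $B(\widehat\gamma,r)=\bigcup_iB(\gamma_i,r)\subset\bigcup_iB(\Gamma_i,r)\subset B(\Gamma,r)$, so $\Gamma$ is admissible. Summing the $N$ nets and the $N-1$ connectors gives $\clH^1(\Gamma)\le N\cdot C(n)\,r$, and since $N\le C_\gamma^{1/\alpha}r^{-1/\alpha}+1$ while $r\le r^{\frac{\alpha-1}{\alpha}}$ for $0<r<1$ and $\alpha\le1$, this is bounded by $C(\alpha,C_\gamma,n)\,r^{\frac{\alpha-1}{\alpha}}$. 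The main obstacle is exactly the density step: one must avoid the natural polygonal competitor, which only produces the weaker $2r$-covering, and instead verify that a bounded-length connected set can be made $r$-dense in a ball of radius $2r$ with a covering constant depending on $n$ — this is precisely where the dimension enters the final constant.
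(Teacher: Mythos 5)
Your proposal is correct, and its skeleton is the same as the paper's: partition $[0,1]$ into $\asymp C_\gamma^{1/\alpha}r^{-1/\alpha}$ intervals so that each image piece has diameter $O(r)$, replace each piece by a connected ``gadget'' of length $O(r)$ whose $r$-neighborhood swallows the $r$-neighborhood of the piece, string the gadgets together with short connectors, and sum. You also isolate exactly the right obstruction: a polygonal interpolant of the points $\gamma(t_i)$ only gives a $2r$-covering, so the competitor must bulge transversally, which is why each gadget needs diameter comparable to $r$. Where you differ is in the choice of gadget. The paper places at each $\gamma(t_i)$ a circle of radius $r/2$, exploiting the fact that the $r$-neighborhood of such a circle (in its own plane) contains the ball of radius $\tfrac{11}{10}r$ about its center; in $\bbR^n$ this forces them to use a family of $C(n)$ circles in suitably tilted planes, verified by a separate spherical-coordinates computation (Lemma \ref{lem: appendix} in the appendix). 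Your gadget is instead a spanning tree over an $\asymp r/\sqrt{n}$-spaced grid restricted to a ball of radius $O(r)$, which is $r$-dense there by a trivial packing count with $C(n)$ points. This is more elementary and handles all dimensions uniformly, eliminating the need for the appendix lemma, at the cost of a cruder (but still dimensional) constant; the paper's circles are geometrically cleaner in the plane but make the higher-dimensional extension the delicate part of the argument. Either way the final count is identical: $O(r^{-1/\alpha})$ gadgets of length $O(r)$ gives $\Lambda(B(\widehat\gamma,r),r)\lesssim r^{(\alpha-1)/\alpha}$, using only the upper H\"older bound, consistent with the constant's independence of $\beta$.
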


\begin{proof}
    We start by partitioning the interval $[0,1]$ as 
    $$ t_i=  i \frac{1}{\lceil 10^{1/\alpha}C_\gamma^{1/\alpha}2^{k_\alpha(r)}\rceil }, \text{ for } i=0,...,\lceil 10^{1/\alpha}C_\gamma^{1/\alpha}2^{k_\alpha(r)}\rceil.$$
   
    For simplicity of notation, we provide details of the proof for the planar case, $\mathbb{R}^2$.
    At each point $\gamma(t_i)$, we center a circle $C_i$ of radius $\frac{r}{2}$.  The choice of $t_i$ ensures that $C_i \cap C_{i+1} \neq \varnothing$ and that $B(\widehat\gamma,r) \subset \bigcup_i B(C_i,r)$. Both are proved in similar fashion, so we provide the details for the latter.  
    
    Let $z\in B(\widehat{\gamma}, r)$.  Then there is some $y\in\widehat{\gamma}$ so that $|z-y|<r$.  
    Since $y\in\widehat{\gamma}$, $y=\gamma(t_y)$ for some $t_y$, and there is $t_{i_y}$ (which is the center of circle $C_{i_y}$) such that 
\[|t_y-t_{i_y}|\le \frac{1}{2\lceil 2^{k_\alpha(r)}10^{1/\alpha}C_\gamma^{1/\alpha}\rceil }\le  \frac{1}{ 2\cdot 2^{k_\alpha(r)}10^{1/\alpha}C_\gamma^{1/\alpha}}.\]  
It follows from the H\"older assumption on $\gamma$ and the definition of $k_\alpha(r)$ that $|\gamma(t_y)-\gamma(t_{i_y})|<\frac{r}{10}$.  That is, $\gamma(t_y)$ is enclosed in the circle $C_{i_y}$.  

Moreover, we have that $\sum_i\mathcal{H}^1(C_i) \lesssim_\alpha r C_\gamma^{1/\alpha}2^{k_\alpha(r)}$. Recalling  $r^{\frac{\alpha-1}{\alpha}}= 2^{k_\alpha(r)(1- \alpha)}$, we obtain the desired conclusion. 
 
 To extend the result to $\mathbb{R}^n$, we replace each circle $C_i$ with a family of $C(n)$ circles $\mathcal{C}_{i,n}$ in planes $P_i+\gamma(t_i)$ where $\{P_i\}_{i=1}^{C(n)}\subset G(n,2)$ are planes at sufficiently close, equally spaced angles off of each coordinate axis in the sense of spherical coordinates; see Figure \ref{fig:my_label}.  We also require that 
 $$ \bigcup_{C \in \mathcal{C}_{i,n}} C \cap \bigcup_{C \in \mathcal{C}_{i+1,n}} C \neq \varnothing, \text{ for every } i. $$
  We postpone the proof of these facts to Lemma \ref{lem: appendix}, located in the appendix.
\end{proof}
 \begin{figure}
     \centering     \includegraphics[scale=.5]{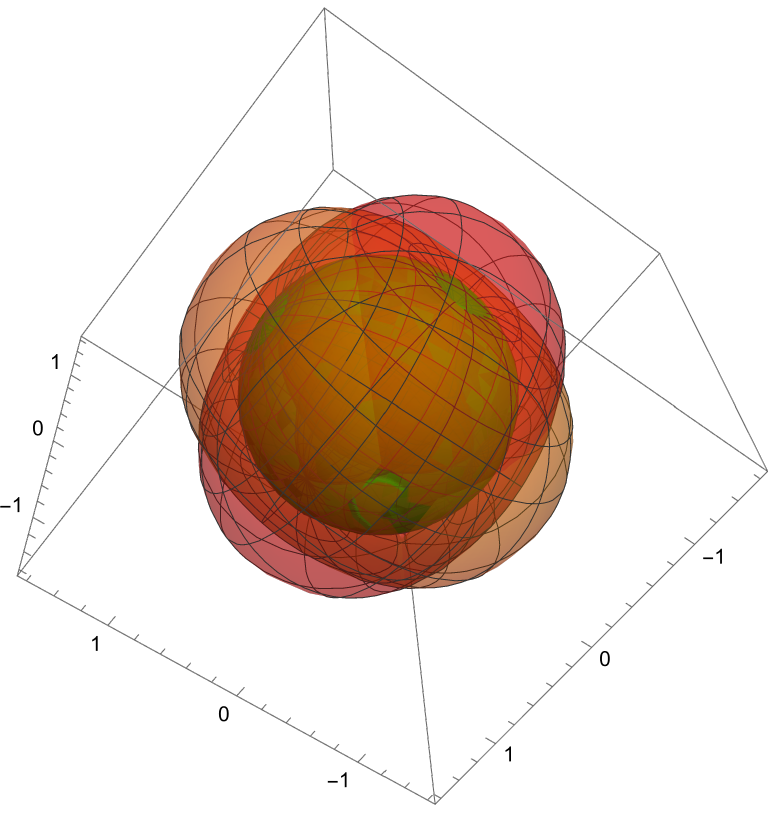}
     \caption{A sphere contained in the union of the $r$-neighborhoods of two circles.}
     \label{fig:my_label}
 \end{figure}
\begin{remark}\label{rem:general case B(E,r) to E}
    The arguments used throughout this section can be adapted to prove an analogous version of Theorem \ref{thm: general case} for $\widehat{\gamma}$ (instead of $B(\widehat{\gamma},r)$). 
    The upper bound follows simply from the fact that $\Lambda(A, r) \leq \Lambda(B, r)$ whenever $A \subset B$, and the only necessary substantial modification for the lower-bound would appear in Lemma \ref{lem:diam estimate}, where the lower bound would be $\frac{1}{2}\diam(E)$, provided that $r$ is small enough in comparison with $\diam(E)$.
\end{remark}

\section{Convergence of minimizers}\label{sec:Convergence of minimizers}

As stated in Section \ref{sec: intro}, one of the main goals of the paper is to answer the following question: given that $E$ is a curve, do the $r$-maximum distance minimizers converge to $E$ in Hausdorff distance as $r$ approaches $0$? 
In the contexts of $\Lambda(B(E,r),r)$ and $\Lambda(E, r)$, we answer this question affirmatively.

\begin{thm}\label{thm: Hausdorff conv H1}
    Let $E \subset \R^2$ be path connected. 
     For any $r>0$, we let $\Gamma^\ast_r$ be an $r$-maximum distance minimizer of $B(E, r)$. 
     Then  $$B(\Gamma^\ast_r,r)\subset B(E,Cr),$$
     where $C>1$ is an absolute constant.
\end{thm}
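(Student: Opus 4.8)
The plan is to argue by contradiction and to prove the slightly stronger statement that $\Gamma^\ast_r \subset B(E,3r)$, from which $B(\Gamma^\ast_r,r)\subset B(E,4r)$ is immediate, so that $C=4$ works. The starting observation is that the part of the minimizer lying close to $E$ already does all of the covering work: setting $\Gamma_{\mathrm{in}} := \Gamma^\ast_r \cap B(E,2r)$, one checks that $B(\Gamma_{\mathrm{in}},r)\supset B(E,r)$. Indeed, if $y\in B(E,r)$ then some $x\in\Gamma^\ast_r$ has $|x-y|\le r$, whence $\dist(x,E)\le \dist(y,E)+r\le 2r$, so $x\in\Gamma_{\mathrm{in}}$ and $y\in B(\Gamma_{\mathrm{in}},r)$. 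Consequently any connected rectifiable set containing $\Gamma_{\mathrm{in}}$ is an admissible competitor, and the only thing preventing us from simply discarding the far-away portion of $\Gamma^\ast_r$ is the requirement that the competitor stay connected.

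So suppose, for contradiction, that some $p\in\Gamma^\ast_r$ has $D:=\dist(p,E)>3r$. Writing $\Gamma^\ast_r$ as a curve $\gamma:[0,1]\to\bbR^2$ and letting $f:=\dist(\gamma(\cdot),E)$, I would isolate the connected component $(a,b)$ of $\{f>2r\}$ containing the parameter of $p$ -- the \emph{far excursion}. Since $f$ is $1$-Lipschitz in arclength and equals $2r$ at the (interior) endpoints of the excursion while attaining the value $D$ inside, the excursion $\mathcal E:=\gamma((a,b))$ satisfies $\clH^1(\mathcal E)\ge 2(D-2r)$; excursions meeting an endpoint of $[0,1]$ are dangling and can simply be deleted, giving a strictly shorter admissible competitor at once. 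The surgery is then to delete $\mathcal E$. If $\Gamma^\ast_r\setminus\mathcal E$ is still connected we are done, as it contains $\Gamma_{\mathrm{in}}$, covers $B(E,r)$, and is strictly shorter. Otherwise $\Gamma^\ast_r\setminus\mathcal E=\gamma([0,a])\cup\gamma([b,1])$ splits into two connected pieces $X\ni\gamma(a)$ and $Y\ni\gamma(b)$, and the two pieces must be reconnected cheaply.

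The heart of the matter, and the step I expect to be the main obstacle, is producing a reconnection of cost at most $2r$, which beats the savings $2(D-2r)$ exactly when $D>3r$. For this I would introduce the auxiliary graph $H$ whose vertices are the connected components of $\Gamma_{\mathrm{in}}$, placing an edge between two components whenever they lie within distance $2r$ of one another. The key claim is that $H$ is connected. This follows from the covering property together with the fact that $B(E,r)$ is connected (because $E$ is path connected): if the vertex set split into two nonempty parts with all cross-distances exceeding $2r$, the corresponding closed $r$-neighborhoods would be disjoint, their union would contain the connected set $B(E,r)$, and each would meet $B(E,r)$ -- since every point of $\Gamma_{\mathrm{in}}$ has its $r$-ball meeting $B(E,r)$ -- which is impossible. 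Granting this, each component of $\Gamma_{\mathrm{in}}$ lies entirely in $X$ or entirely in $Y$; as $H$ is connected, some edge crosses this partition, i.e. there are components $G'\subset X$ and $G''\subset Y$ with $\dist(G',G'')\le 2r$, and adjoining the realizing segment $\sigma$ of length $\le 2r$ produces $Z:=X\cup Y\cup\sigma$.

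Finally I would verify that $Z$ is an admissible competitor: it is connected, it contains $\Gamma_{\mathrm{in}}$ (so $B(Z,r)\supset B(E,r)$), and being a compact connected set of finite $\clH^1$ measure it is the image of a rectifiable curve. Its length obeys $\clH^1(Z)\le \clH^1(\Gamma^\ast_r)-2(D-2r)+2r<\clH^1(\Gamma^\ast_r)$ whenever $D>3r$, contradicting the minimality of $\Gamma^\ast_r$. Hence $\Gamma^\ast_r\subset B(E,3r)$ and $C=4$ suffices. I would remark that planarity is not actually used anywhere in this scheme -- the connectedness of $B(E,r)$ and of the graph $H$ carry all the weight -- so the same argument in fact yields the conclusion in $\bbR^n$.
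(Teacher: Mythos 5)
Your two preliminary observations are sound: $\Gamma_{\mathrm{in}}:=\Gamma^\ast_r\cap B(E,2r)$ indeed does all the covering work, and your graph-connectivity claim (pieces meeting $B(E,2r)$ can be joined by a segment of length $\leq 2r$, via connectedness of $B(E,r)$) is essentially Lemma \ref{chain} of the paper. The genuine gap is the surgery step, specifically the sentence ``Otherwise $\Gamma^\ast_r\setminus\mathcal E=\gamma([0,a])\cup\gamma([b,1])$ splits into two connected pieces.'' This conflates two different operations, and both fail. By Theorem \ref{no loops theorem}, the minimizer is a tree, so every parametrization $\gamma$ of $\Gamma^\ast_r$ must retrace points; in particular the stated identity is false in general, since $\gamma([0,a])\cup\gamma([b,1])$ can contain points (indeed, all) of $\mathcal E$. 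Consequently: (i) if your competitor is $\gamma([0,a])\cup\gamma([b,1])$ plus a segment, it is a union of two connected sets, but the claimed saving of $2(D-2r)$ in $\clH^1$ is unjustified --- deleting a parameter interval need not delete any measure of the image (even your bound $\clH^1(\mathcal E)\geq 2(D-2r)$ already confuses arclength of the parametrization with $\clH^1$ of the image: an excursion that goes up a far branch and back down the same branch has image measure only $D-2r$). (ii) If instead you delete $\mathcal E$ as a \emph{set}, the measure saving is real, but the remainder need not have two components. Concretely, suppose (for the contradiction hypothesis) the far part of $\Gamma^\ast_r$ is a hub point $h$ at distance $D$ from $E$ with $N$ branches descending from $h$ into $B(E,2r)$. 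Any parametrization produces excursions of the form ``up branch $i$, through $h$, down branch $i+1$,'' and deleting one such excursion as a set removes $h$ itself, shattering the remainder into roughly $N$ components (all the other branches were joined only through $h$). The saving is still only $2(D-2r)$, while reconnection now requires about $N-1$ segments, each costing up to $2r$; for $N$ large the surgery produces a \emph{longer} competitor and no contradiction follows. Your argument implicitly assumes the far part attaches to the near region at only two points, which is exactly what cannot be assumed.

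This many-attachment configuration is precisely the difficulty the paper's proof is built to overcome, and it explains why the paper's constant is a large absolute constant rather than $C=4$ (your small constant is itself a warning sign). The paper fixes a far root $z$, prunes the tree (Lemma \ref{lem: pruning}: no non-covering loose branches, and limbs outside $B(E,6r)$ have length $\leq 4r$), and shows via Lemma \ref{lem: long substree} that inside $B(z,Ar)$ the minimizer must contain a binary subtree of length at least $2^{A/8}4r$ --- every branch must keep subdividing and return to $B(E,2r)$. It then performs a single global surgery, replacing $\Gamma^\ast\cap B(z,Ar)^\circ$ by the circle $\partial B(z,Ar)$, which reconnects \emph{all} severed branches simultaneously at cost $2\pi Ar$; the contradiction is exponential-versus-linear in $A$, not a fixed saving versus one segment. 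Note also that this reconnection device is genuinely planar (a sphere in $\bbR^n$, $n\geq 3$, has infinite $\clH^1$), which is why the paper only conjectures the higher-dimensional extension; your closing claim that the argument works verbatim in $\bbR^n$ inherits the same gap.
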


\begin{cor}\label{cor: Hausdorff conv length}
    Under the assumptions above, we have that $\Gamma^\ast_r$ converges to $E$ in Hausdorff distance as $r \rightarrow 0^+$.
\end{cor}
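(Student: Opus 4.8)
The plan is to unwind the definition of Hausdorff distance $d_H(\Gamma^\ast_r, E) = \max\{F_{\Gamma^\ast_r}(E), F_E(\Gamma^\ast_r)\}$ into its two one-sided pieces and bound each by a constant multiple of $r$. Theorem~\ref{thm: Hausdorff conv H1} has already done the substantive work, so the corollary reduces to a short containment argument in the right directions.

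First I would control $F_{\Gamma^\ast_r}(E) = \max_{y \in E}\dist(y, \Gamma^\ast_r)$. Since $\Gamma^\ast_r$ is an $r$-maximum distance minimizer of $B(E, r)$, by definition $B(\Gamma^\ast_r, r) \supset B(E, r) \supset E$. Hence every $y \in E$ satisfies $\dist(y, \Gamma^\ast_r) \leq r$, which gives $F_{\Gamma^\ast_r}(E) \leq r$.

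Next I would control $F_E(\Gamma^\ast_r) = \max_{x \in \Gamma^\ast_r}\dist(x, E)$ using Theorem~\ref{thm: Hausdorff conv H1}. Any $x \in \Gamma^\ast_r$ trivially lies in $B(\Gamma^\ast_r, r)$, and the theorem supplies $B(\Gamma^\ast_r, r) \subset B(E, Cr)$; therefore $x \in B(E, Cr)$, i.e. $\dist(x, E) \leq Cr$, so $F_E(\Gamma^\ast_r) \leq Cr$. Combining the two estimates, $d_H(\Gamma^\ast_r, E) \leq \max\{r, Cr\} = Cr$, which tends to $0$ as $r \to 0^+$ because $C$ is an absolute constant.

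There is no genuine obstacle at this stage: the entire difficulty is absorbed into Theorem~\ref{thm: Hausdorff conv H1}. The only point requiring care is bookkeeping the two inclusions in the correct directions -- the minimizer's $r$-neighborhood \emph{contains} $E$ (controlling the side $F_{\Gamma^\ast_r}(E)$), while it is itself \emph{contained} in a slightly enlarged neighborhood $B(E, Cr)$ of $E$ (controlling the side $F_E(\Gamma^\ast_r)$) -- and noting that $C$ does not depend on $r$, so both one-sided distances vanish in the limit.
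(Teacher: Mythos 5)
Your proof is correct and is exactly the intended deduction: the paper states the corollary as an immediate consequence of Theorem~\ref{thm: Hausdorff conv H1} without writing out the details, and your two one-sided estimates ($F_{\Gamma^\ast_r}(E) \leq r$ from the minimizer's covering property, $F_E(\Gamma^\ast_r) \leq Cr$ from the theorem's inclusion) are precisely how that deduction goes.
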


\begin{remark}
    We point out that in Theorem \ref{thm: Hausdorff conv H1}, the condition regarding path connectedness is natural, since it is clear that the conclusion fails for compact sets with several connected components.
\end{remark}

For the proof of the Theorem \ref{thm: Hausdorff conv H1}, we need the following lemma:
\begin{lem}\label{chain}
   Let $E$ and $r$ be as above and let $\Gamma^\ast_r$ be an $r$-maximum distance minimizer of $B(E, r)$. 
   Assume the existence of two elements $x_0$ and $x_1$ of ${\Gamma^\ast_r}\cap B(E,2r)$. 
   Then there exists a finite sequence of points $\{x_i\}_{i=2}^N \subset \Gamma^\ast_r\cap B(E,2r)$ satisfying that 
    $$|x_j-x_{j+1}|\leq 4r,$$
    for $j=1,...,N$, where we have identified $x_{N+1}\equiv x_{0}$.
\end{lem}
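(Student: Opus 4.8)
The plan is to transport a continuous path inside $E$ onto the minimizer $\Gamma^\ast_r$, using the covering inclusion $B(\Gamma^\ast_r,r)\supset B(E,r)\supset E$, which forces every point of $E$ to lie within distance $r$ of $\Gamma^\ast_r$. Note that minimality of $\Gamma^\ast_r$ is not really needed here; only this inclusion, the fact that $\Gamma^\ast_r$ is compact (so that the distance to it is attained), and the path-connectedness of $E$ will be used.

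First I would record anchors in $E$: since $x_0,x_1\in B(E,2r)$, choose $e_0,e_1\in E$ with $|x_0-e_0|\le 2r$ and $|x_1-e_1|\le 2r$. Using path-connectedness, fix a continuous path $\sigma\colon[0,1]\to E$ with $\sigma(0)=e_1$ and $\sigma(1)=e_0$. Because $[0,1]$ is compact, $\sigma$ is uniformly continuous, so I can select a partition $0=s_0<s_1<\cdots<s_M=1$ fine enough that $|\sigma(s_k)-\sigma(s_{k+1})|\le 2r$ for every $k$.

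Second, for each node, the point $\sigma(s_k)\in E$ satisfies $\dist(\sigma(s_k),\Gamma^\ast_r)\le r$, which is exactly the statement $E\subset B(E,r)\subset B(\Gamma^\ast_r,r)$ together with compactness of $\Gamma^\ast_r$. Hence I may pick $p_k\in\Gamma^\ast_r$ with $|\sigma(s_k)-p_k|\le r$, and each such $p_k$ lies in $B(E,r)\subset B(E,2r)$, so $p_k\in\Gamma^\ast_r\cap B(E,2r)$ as the chain requires. Setting $x_2=p_0,\ x_3=p_1,\ \dots,\ x_{M+2}=p_M$ (so $N=M+2$), the distance bounds follow from the triangle inequality: for consecutive interior nodes,
\[
|p_k-p_{k+1}|\le |p_k-\sigma(s_k)|+|\sigma(s_k)-\sigma(s_{k+1})|+|\sigma(s_{k+1})-p_{k+1}|\le r+2r+r=4r,
\]
while at the two ends $|x_1-p_0|\le |x_1-e_1|+|e_1-p_0|\le 2r+r=3r$ and $|p_M-x_0|\le |p_M-e_0|+|e_0-x_0|\le r+2r=3r$, matching the cyclic identification $x_{N+1}\equiv x_0$. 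Thus every step is at most $4r$.

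The argument is essentially routine; the only points requiring care are the bookkeeping that splices the endpoints of the transported chain onto the prescribed anchors $x_0$ and $x_1$ — observe that the lemma deliberately does \emph{not} assert a bound between $x_0$ and $x_1$ themselves, since the loop is closed through them rather than across them — and the calibration of the partition so that the three-term estimate lands at exactly $4r$. I do not expect any genuine obstacle beyond verifying that the selected $p_k$ genuinely lie in $B(E,2r)$, which is immediate from the covering inclusion.
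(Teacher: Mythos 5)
Your proof is correct and follows essentially the same route as the paper's: anchor $x_0,x_1$ to points of $E$, discretize a connecting path in $E$ via uniform continuity, lift the nodes back onto $\Gamma^\ast_r$ using $E\subset B(E,r)\subset B(\Gamma^\ast_r,r)$, and conclude with the triangle inequality. The only (immaterial) difference is calibration — the paper spaces the path nodes at $\leq r$ so the $4r$ bound occurs at the endpoint steps, while you space them at $\leq 2r$ so it occurs at the interior steps; both land within the stated $4r$.
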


\begin{proof}
For $i\in \{0,1\}$, since  $\dist(x_i,E)\leq 2r$, there exists $y_i \in E$ such that 
$$|y_i-x_i|\leq 2r.$$
From here we can deduce, for some $n \in \mathbb{N}$, the existence of points $y_2,...,y_N \in E$ such that  
$$ 0<|y_j-y_{j+1}|\leq r\text{ for every }j=1,...,N \text{  with } y_{N+1}\equiv y_0.$$
Now, since $E \subset B(\Gamma^\ast_r,r)$, we can find $x_2,...,x_{N}\in \Gamma^\ast_r$ satisfying that 
$$|x_j-y_j|\leq r \text{ for }j=2,...,N.$$
Using the triangle inequality, we obtain the lemma.
\end{proof}

Additionally, for the proof of Theorem \ref{thm: Hausdorff conv H1}, it will be convenient to assume that our $r$-maximum distance minimizer is a tree. The following theorem will allow us to do so.

\begin{thm}[Theorem 5.5.~\cite{paolini2004qualitative}]\label{no loops theorem}
 Let $E\subset \R^n$ be a compact set, let $\Gamma^\ast_r$ be an $r$-maximum distance minimizer of $E$, and let $F_E(\Gamma^\ast_r) > 0$. 
 Then $\Gamma^\ast_r$ is a tree.
 \end{thm}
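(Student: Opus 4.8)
The plan is to argue by contradiction via a cut-and-paste (competitor) argument: if the connected minimizer $\Gamma^\ast_r$ contained a loop, i.e. a subset homeomorphic to a circle, I would produce a strictly shorter competitor still covering $E$, contradicting the minimality $\clH^1(\Gamma^\ast_r) = \Lambda(E,r)$. First I would record that, since $\Gamma^\ast_r$ has finite length, it has at most countably many branch points (points of multiplicity $\geq 3$), so along any loop $\mathcal{L}\subseteq\Gamma^\ast_r$ I can pick a point $x_0$ that is an interior point of a simple sub-arc of $\mathcal{L}$ and is not a branch point of $\Gamma^\ast_r$. Deleting a small closed sub-arc $A_\varepsilon\ni x_0$ of $\mathcal{L}$ keeps the set connected (removing an arc from a cycle leaves a path joining its endpoints), so $\Gamma^\ast_r\setminus A_\varepsilon$ is still an admissible connected curve, and it is strictly shorter whenever $A_\varepsilon$ has positive length.

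The only thing that can go wrong is coverage: $B(\Gamma^\ast_r\setminus A_\varepsilon, r)$ could fail to contain $E$. This happens exactly when some $y\in E$ has $\Gamma^\ast_r\cap \overline{B}(y,r)\subseteq A_\varepsilon$, i.e. $y$ is covered only by the deleted arc. I would then let $\varepsilon\to 0^+$: since $A_\varepsilon$ shrinks to the single point $x_0$, any $y$ obstructing removal for all small $\varepsilon$ must satisfy $\Gamma^\ast_r\cap\overline{B}(y,r)=\{x_0\}$. Because $x_0$ is an interior non-branch point, points of $\Gamma^\ast_r$ lie on both sides of $x_0$ along $\mathcal{L}$; if $|y-x_0|<r$ these neighbors would also lie in $\overline{B}(y,r)$, contradicting $\{x_0\}$. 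Hence such an obstructing $y$ forces $\dist(y,\Gamma^\ast_r)=|y-x_0|=r$, with $\overline{B}(y,r)$ tangent to $\Gamma^\ast_r$ only at $x_0$. In particular, if the covering is slack, so that $F_E(\Gamma^\ast_r)<r$, then no such $y$ exists, some $A_\varepsilon$ is removable, and we are done; here the hypothesis $F_E(\Gamma^\ast_r)>0$ is what excludes the degenerate situation $E\subseteq\Gamma^\ast_r$, in which a loop carried by $E$ itself could be unremovable.

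The main obstacle is therefore the tight case $F_E(\Gamma^\ast_r)=r$, where every candidate $x_0$ on the loop may carry a tangential witness $y\in E$ lying at distance exactly $r$ on the concave side of $\mathcal{L}$. Pure deletion is then unavailable, and I would instead replace a short sub-arc $A=\mathcal{L}\cap\overline{B}(x_0,\delta)$ by the chord joining its endpoints. This strictly shortens $\Gamma^\ast_r$, and since the chord sits on the concave side of the arc it moves the curve \emph{toward} the witnesses $y$, so those constraints are not violated. The delicate point I expect to fight is that points of $E$ covered tangentially from the \emph{convex} side of $A$ could be lost; the rerouting must therefore be localized, using that $A$ can be taken short enough that its convex-side witnesses are recaptured by neighboring portions of $\Gamma^\ast_r$. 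Making this quantitative — controlling the two families of tangency witnesses simultaneously and certifying a net shortening — is the technical heart of the argument, and is precisely where the finer structural properties of maximum distance minimizers established in \cite{paolini2004qualitative} are invoked.
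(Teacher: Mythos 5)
First, note that the paper does not prove this statement at all: it is imported verbatim as Theorem 5.5 of Paolini--Stepanov \cite{paolini2004qualitative} and used as a black box in Section \ref{sec:Convergence of minimizers}, so there is no internal proof to compare yours against. Judged on its own terms, your attempt has a genuine gap, and it sits exactly where you place it. The deletion argument of your first two paragraphs disposes only of the ``slack'' case, and that case is essentially vacuous: any minimizer of positive length satisfies $F_E(\Gamma^\ast_r)=r$ exactly, since otherwise scaling $\Gamma^\ast_r$ by a factor $1-t$ about one of its points gives, for small $t>0$, a strictly shorter admissible competitor. So the ``tight'' case is not a boundary nuisance --- it is the whole theorem. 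Concretely, take $\Gamma$ to be the unit circle in $\R^2$ and $E$ the concentric circle of radius $1+r$: every $y\in E$ satisfies $\Gamma\cap\overline{B}(y,r)=\{x\}$ for a single tangential $x$, so \emph{every} point of the loop is essential in your sense, no arc whatsoever can be deleted, and your chord replacement also fails --- here the witnesses lie on the \emph{convex} side, the chord retreats from them, and the point of $E$ radially outward from the midpoint of the replaced arc ends up at distance strictly greater than $r$ both from the chord and from the untouched part of the circle, so it is not recaptured by neighboring portions of $\Gamma$. Ruling out precisely this configuration as a minimizer is the actual content of the theorem, and your final sentence defers it to ``the finer structural properties \dots established in \cite{paolini2004qualitative}'', which is circular: that is the result you were asked to prove.

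Two smaller points. In $\R^n$ with $n\ge 3$, or for a non-smooth loop, ``concave side'' and ``convex side'' are not defined, so even the statement of your proposed repair does not make sense in the generality of the theorem. And your connectivity claim for $\Gamma^\ast_r\setminus A_\varepsilon$ needs care: choosing $x_0$ to not be a branch point does not prevent branch points (a countable but possibly dense set) from lying inside $A_\varepsilon$, and deleting an arc through a branch point can disconnect the subtree hanging off of it; this is fixable (prune or reattach those subtrees), but it is not addressed.
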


\subsection{Proof of Theorem \ref{thm: Hausdorff conv H1}}
 Let $E$ be a compact path connected set and let $r>0$. 
 Let $\Gamma^\ast_r$ be an $r$-maximum distance minimizer for $B(E, r)$ and let $ A> 1$ be a number to be determined later. 

For notational convenience and since $r$ remains fixed throughout the remainder of this section, we will denote $\Gamma^\ast_r$ simply by $\Gamma^\ast$.

Assume that $\Gamma^*\not\subset B(E,2Ar)$.  
As mentioned above, Theorem \ref{no loops theorem} allows us to deduce that $\Gamma^\ast$ is a tree. Indeed, we have that  $F_{B(E,r)}(\Gamma^\ast)>0$, since otherwise, $B(E,r) \subset \Gamma^\ast$ and therefore $\clH^1(\Gamma^\ast)=\infty$.
 
We begin by fixing a useful tree structure on $\Gamma^*$ and introducing relevant terminology. 
\begin{itemize}
    \item We fix a point $z\in {\Gamma^*}\setminus B(E,2Ar)$, to be the \textit{root} of $\Gamma^*$. In general, a root will be what we consider the starting point for ``pruning'' $\Gamma^\ast$ in the subsequent arguments. 

    \item Let $T\subset \Gamma^*$ be a sub-tree.  We let $x_T$ denote the unique point in $T$ such that for all $\epsilon>0$, $B(x_T,\epsilon)\cap (\Gamma^*\setminus T)\not=\varnothing$.  We call $x_T$ a \textit{sub-root} of the sub-tree $T$ and $T$ a \textit{loose branch} if $(\Gamma^*\setminus T)\cup \{x_T\}$ is connected.

    \item We say that a point $x\in \Gamma^*$ is a \emph{branch point} if $x=z$ is the root or if there exists $\epsilon_x>0$ such that for all $0<\epsilon<\epsilon_x$ we have that $\Gamma^* \setminus B(x,\epsilon)$ contains at least three components.

    \item For any $x,y\in {\Gamma^*}$, we define $\Gamma^*_{x,y}$ to be the unique directed path along ${\Gamma^*}$ from $x$ to $y$. For convenience, we do not include the endpoints $x$ and $y$ in $\Gamma^*_{x,y}$. 
    
    \item Let $x$ and $y$ be consecutive branch points. Then we call $\Gamma^*_{x,y}$ the \textit{limb} from $x$ to $y$.

    \item We say that a sub-tree $T \subset \Gamma^*$ is  \textit{non-covering} if $T \cap B(E,2r) = \varnothing$.

    \item Recall that $z$ denotes the root of $\Gamma^\ast$, and let $y \in \Gamma^\ast$. 
    Along the path $\Gamma^*_{z,y}$, we may enumerate the branch points as we move from $z$ to $y$; each of these branch points in $\Gamma^*_{z,y}$ is called a \textit{descendant} of $z$. 
    Moreover, we say $x$ is a \textit{child} of $z$ if $x$ is the first branch point encountered along $\Gamma^*_{z,y}$.  
    Similarly, for $w$ a descendant of $z$, we define $w'$ to be a child of $w$ if $w'$ is the first branch point encountered along $\Gamma^*_{z,y}$ after leaving $w$.

     \item For a branch point $x$ with child $x'$ in a sub-tree $T\subset \Gamma^\ast$, we say that we \emph{erase the descendant path} of $x$ associated to $x'$ to mean that we remove all of the following from $T$:  the limb $\Gamma_{x,x'}$, $x'$ itself, the limb for each descendant of $x'$, and all descendants of $x'$.
\end{itemize}

\begin{lem}[\textbf{Pruning the tree}]\label{lem: pruning}
The minimizer $\Gamma^*$ satisfies
    \begin{enumerate}
        \item[(P1)] $\Gamma^*$ has no non-covering loose branches,
        \item[(P2)]  every limb $\eta \subset \Gamma^\ast\setminus B(E,6r)$ satisfies that $\mathcal{H}^1(\eta)\leq4r$, and 
        \item[(P3)] any sub-tree $T \subset \Gamma^*$ has at most a finite number of branch points in $\Gamma^*\setminus B(E,6r)$.
    \end{enumerate}
\end{lem}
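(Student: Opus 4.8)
All three properties are optimality statements for the minimizer, and the plan is to prove each by an exchange (competitor) argument: assuming the property fails, I build a rectifiable competitor $\Gamma'$ that is connected, still satisfies $B(\Gamma',r)\supset B(E,r)$, and has strictly smaller $\mathcal{H}^1$-measure, contradicting $\mathcal{H}^1(\Gamma^\ast)=\Lambda(B(E,r),r)$. I would establish them in the order (P1)$\to$(P2)$\to$(P3), using the earlier parts in the later ones. The device used throughout is the following ``$2r$-trap'': if $p\in B(E,r)$ satisfies $|p-t|\le r$ for some point $t$, then $t\in B(E,2r)$, since $e\in E$ with $|p-e|\le r$ forces $|t-e|\le 2r$. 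For (P1), suppose $T$ is a non-covering loose branch with sub-root $x_T$ and set $\Gamma'=(\Gamma^\ast\setminus T)\cup\{x_T\}$; by the definition of loose branch $\Gamma'$ is connected, and it is strictly shorter. It still covers, for if some $p\in B(E,r)$ had $\dist(p,\Gamma')>r$ then $p$ could only be covered from within $T$, producing $t\in T\cap B(E,2r)$ by the $2r$-trap and contradicting $T\cap B(E,2r)=\varnothing$. This contradicts minimality; in particular every leaf of $\Gamma^\ast$ lies in $B(E,2r)$.

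For (P2) I would run a chain-lemma exchange. Let $\eta$ be a limb between consecutive branch points $x,y$ with $\eta\subset\Gamma^\ast\setminus B(E,6r)$ and $\mathcal{H}^1(\eta)>4r$. Since $\eta\cap B(E,2r)=\varnothing$, the $2r$-trap shows $\eta$ covers no point of $B(E,r)$. Removing the open limb disconnects the tree into $G_x\ni x$ and $G_y\ni y$, each of which is a loose branch, so by (P1) each meets $B(E,2r)$. Applying Lemma \ref{chain} to points $x_0\in G_x\cap B(E,2r)$ and $x_1\in G_y\cap B(E,2r)$ yields a cyclic chain inside $\Gamma^\ast\cap B(E,2r)$ with consecutive gaps at most $4r$ joining $x_0$ to $x_1$. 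Because the removed limb separates the two sides and contains none of the chain points, some consecutive pair $p,p'$ of the chain straddles the cut; reconnecting $G_x$ to $G_y$ by the segment $[p,p']$ (of length at most $4r$) gives a connected competitor still covering $B(E,r)$ whose length has dropped by $\mathcal{H}^1(\eta)-4r>0$, a contradiction.

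For (P3) it suffices to treat $T=\Gamma^\ast$, since the branch points of any sub-tree outside $B(E,6r)$ form a subset of those of $\Gamma^\ast$. Suppose the set $\mathcal{B}$ of branch points lying in $\Gamma^\ast\setminus B(E,6r)$ is infinite, and root $\Gamma^\ast$ at $z$. Being infinite, $\mathcal{B}$ contains, in the ancestry order, either an infinite chain or an infinite antichain. In either case I extract infinitely many pairwise disjoint subtrees, one attached to each selected element of $\mathcal{B}$: along a chain I use at each vertex a lateral child branch (available since the degree is at least $3$), while for an antichain I use the pairwise disjoint descendant subtrees. By (P1) each such subtree is covering, so it contains a path from a point outside $B(E,6r)$ to a point of $B(E,2r)$; as $\dist(\cdot,E)$ is $1$-Lipschitz, each such subtree has length at least $4r$. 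Summing over infinitely many disjoint pieces forces $\mathcal{H}^1(\Gamma^\ast)=\infty$, a contradiction.

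The main obstacle is (P3): one must rule out both branch points of infinite degree and branch points accumulating in $\Gamma^\ast\setminus B(E,6r)$, which is precisely what the chain/antichain dichotomy together with the disjointness bookkeeping accomplishes, while the annulus-crossing estimate of length at least $4r$ per selected branch is the feature that converts infinitude of $\mathcal{B}$ into infinite length. A secondary point requiring care is the verification in (P2) that the cyclic chain genuinely crosses the cut, which rests on the fact that no chain point can lie on $\eta$.
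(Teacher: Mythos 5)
Your proposal is correct and follows essentially the same route as the paper. For (P1) and (P2) the arguments coincide: delete the non-covering loose branch, respectively delete the long limb and reconnect the two remaining components by a segment of length at most $4r$ obtained from a straddling consecutive pair in the cyclic chain of Lemma~\ref{chain}; your ``$2r$-trap'' is precisely the (implicit) coverage verification the paper relies on, and your use of (P1) to guarantee that both components meet $B(E,2r)$ merely replaces the paper's alternative of discarding outright a component disjoint from $B(E,2r)$ --- both are valid. The one place where you genuinely add something is (P3): the paper only notes that each of the infinitely many branch points outside $B(E,6r)$ admits a path of length at least $4r$ reaching $\partial B(E,2r)$ and declares a contradiction with $\mathcal{H}^1(\Gamma^\ast)<\infty$, leaving the disjointness of these paths unaddressed; your chain/antichain dichotomy, together with the extraction of pairwise disjoint lateral (respectively descendant) subtrees each of length at least $4r$ by (P1) and the $1$-Lipschitz property of $\dist(\cdot,E)$, supplies exactly the bookkeeping needed to make that count rigorous.
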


\begin{proof}
We prove the three properties sequentially.
    
    \textit{(P1) Discarding non-covering loose branches}: Assume that there is a non-covering loose branch $T\subset {\Gamma^\ast}$ with sub-root $x_T$ and $\mathcal{H}^1(T)>0$, otherwise there is nothing to prove. In this case, we may remove $T$ by considering $\Gamma'= ({\Gamma^\ast}\setminus T)\cup \{x_T\}$ in place of ${\Gamma^\ast}$. We note that this will maintain $E\subset B(\Gamma',2r)$ and $\mathcal{H}^1({\Gamma'})<\mathcal{H}^1({\Gamma^\ast})$. This clearly contradicts $\Gamma^*$ as a minimizer, and so we cannot have any non-covering loose branches.

    \textit{(P2) Discarding non-covering long limbs}: Assume the existence of a limb $\eta\subset {\Gamma^*}\setminus B(E,6r)$ such that $\mathcal{H}^1(\eta)>4r$. 
    Removing $\eta$ leaves at most two connected components of ${\Gamma^*}\setminus \eta$, say $\Gamma_0$ and $\Gamma_1$. 
    If we have that $\Gamma_i \subset B(E,2r)^c$ for  $i=0$ or 1, we simply remove $\Gamma_i$ and define $\Gamma' :=\Gamma_{i+1\,(\mathrm{mod}\,2)}$.  
    
    Otherwise, there exist points $x_0\in \Gamma_0$ and $x_1\in \Gamma_1$ so that $x_0,x_1\in B(E,2r)$. 
    We may use Lemma \ref{chain} to  find a sequence of points $x_2, ..., x_N \in {\Gamma^\ast}\cap B(E,2r)$ such that $|x_j - x_{j+1}|\leq 4r$ for $j=1,...,N$, where we have identified $x_0 \equiv x_{N+1}$.  
    We observe that each $x_j \in \Gamma_i$ for some $i=0,1$. 
    In particular, there exists some $j$ with $1 \leq j \leq N$ such that $x_j \in \Gamma_1$ and $x_{j+1} \in \Gamma_0$. 
    We proceed to connect $x_j$ and $x_{j+1}$ with the line segment $[x_j,x_{j+1}]$ whose length is at most $4r$. 
    We then define $\Gamma':= \Gamma_0\cup\Gamma_1\cup [x_j,x_{j+1}]$.
    
    Thus, in either case, we have obtained a connected curve $\Gamma'$ satisfying $$\mathcal{H}^1(\Gamma')<\mathcal{H}^1({\Gamma^\ast}).$$ 
    Once more, this contradicts the fact that $\Gamma^\ast$ is an $r$-maximum distance minimizer of $B(E, r)$.

     \textit{(P3) Discarding infinite branch points}: Assume that $T\subset {\Gamma^*}$ is a sub-tree with an infinite number of branch points that lie in $B(E,6r)^c$. We note that we have already discarded non-covering loose branches with possibly infinite branching; hence we may assume that there are infinitely many branch points $x_j\in T \setminus B(E,6r)$ such that there exists a path from $x_j$ to $\partial B(E,2r)$. In particular, each of these paths must have length at least $4r$, which contradicts the finite length of $\Gamma^*$.

     This completes the proof of Lemma \ref{lem: pruning}.
\end{proof}

With the assurance that $\Gamma^*$ satisfies (P1), (P2), and (P3), we turn to another lemma which allows us to estimate the $\mathcal{H}^1$ measure of $\Gamma^\ast$ from below. We will use the terminology developed above and the conclusions of Lemma \ref{lem: pruning}.

\begin{lem}[Identifying a binary sub-tree of sufficiently large $\clH^1$ measure]\label{lem: long substree}  Let $A\ge 64$. Assume that $\Gamma^*\not\subset B(E,2Ar)$ and define $T=\Gamma^*\cap B(z,Ar)$.  Then there exists a binary sub-tree $T'\subset T$ such that $\clH^1(T)\ge\clH^1(T')\ge 2^{A/8}4r$.
\end{lem}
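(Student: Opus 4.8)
The plan is to use that $T$ lies deep in the region where $E$ is irrelevant to covering, so that the pruning conclusions (P1)--(P3) force $\Gamma^*$ to keep splitting, and then to harvest this branching into a full binary tree whose edges each carry length of order $r$.

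First I would record that $T$ sits in the far region. Since $z\notin B(E,2Ar)$ we have $\dist(z,E)>2Ar$, so every $x\in B(z,Ar)$ satisfies $\dist(x,E)\ge \dist(z,E)-|x-z|>Ar\ge 64r$; in particular $T\subset \Gamma^*\setminus B(E,6r)$, so (P2) applies to every limb that meets $B(z,Ar)$ and consecutive branch points there are at path-distance at most $4r$. The crucial structural step I would prove next is that $\Gamma^*$ has \emph{no leaves} inside $B(z,Ar)$. Indeed, if $\ell$ were a leaf with $\dist(\ell,E)>6r$, then its limb back to the nearest branch point is a far-region limb, hence of length $\le 4r$ by (P2), so it stays inside $B(\ell,4r)$ and therefore at distance $>2r$ from $E$; together with $\ell$ it is then a non-covering loose branch, contradicting (P1). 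Consequently, following $\Gamma^*$ outward from any point of $T$ one never dead-ends while remaining in $B(z,Ar)$, and by (P2) the next branch point is always reached within path-length $4r$; (P3) guarantees the traversal is combinatorially finite.

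With these two facts I would build $T'$ greedily from the root. Starting at $z$ (which, not being a leaf, offers at least two outgoing directions), I follow $\Gamma^*$ along a chosen direction until the accumulated path-length first reaches $4r$ and then continue to the next branch point; by (P2) that point is met before length $8r$, so the resulting edge has length in $[4r,8r)$ and Euclidean diameter $<8r$. At the new branch point, a genuine far-region branch point of degree $\ge 3$, at least two forward directions remain, so I select exactly two and recurse. Iterating yields a full binary tree in which every generation-$k$ vertex lies within Euclidean distance $8rk$ of $z$; choosing depth $d=\lfloor A/8\rfloor$ then keeps all vertices and all edges inside $B(z,Ar)$, so that $T'\subset T$, and the two subtrees hanging off any vertex are disjoint because $\Gamma^*$ is a tree.

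Finally I would count: $T'$ is a full binary tree of depth $d\asymp A/8$ with pairwise edge-disjoint edges, each of length at least $4r$, so its total number of edges is of order $2^{A/8}$ and $\clH^1(T')\gtrsim 2^{A/8}\cdot 4r$, which is the asserted bound (the rounding in $\lfloor A/8\rfloor$ is immaterial for the eventual contradiction with minimality in Theorem~\ref{thm: Hausdorff conv H1}), while $\clH^1(T)\ge \clH^1(T')$ is immediate from $T'\subset T$. The main obstacle is precisely the structural step: showing that the branching genuinely persists through all $\sim A/8$ generations, i.e. that every selected vertex deep inside $B(z,Ar)$ really has two forward children that again lie in the far region. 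This is where (P1) (ruling out far-region dead ends and spikes) and (P2) (keeping branch points within $4r$) must be combined, and where one must take care that the greedy traversal never leaves $B(z,Ar)$ nor strays within $6r$ of $E$; the bookkeeping of the per-edge length ($4r$ versus $8r$) against the radius budget $Ar$ is exactly what fixes the exponent $A/8$.
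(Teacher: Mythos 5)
Your overall strategy coincides with the paper's: root the tree at $z$, use (P1)/(P2) to keep the branching alive, greedily extract a binary sub-tree whose edges have $\clH^1$-measure between $4r$ and roughly $8r$, and count about $A/8$ generations inside $B(z,Ar)$ to get $\clH^1(T')\gtrsim 2^{A/8}\,4r$; the paper's ``Base Point Step / Child Step'' algorithm is exactly your greedy traversal, and your depth-versus-radius bookkeeping matches theirs.

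However, the step you yourself single out as crucial --- ruling out dead ends --- is argued circularly. You assert that a leaf $\ell$ with $\dist(\ell,E)>6r$ has ``its limb back to the nearest branch point'' contained in $\Gamma^*\setminus B(E,6r)$, and then invoke (P2). But (P2) applies only to limbs lying \emph{entirely} outside $B(E,6r)$, and nothing prevents, a priori, the arc ending at $\ell$ from starting at, or dipping to within, distance $6r$ (even $2r$) of $E$; such an arc can be arbitrarily long, and (P2) is silent about it. (Strictly speaking it need not even be a limb, since a leaf is not a branch point in the paper's terminology.) Ruling out precisely this kind of long dead end is the content of the claim, so it cannot be assumed. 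The repair uses (P1) alone: traverse the arc towards $\ell$ and let $q$ be its last point lying in $B(E,2r)$ (take $q$ to be the nearest branch point if the arc misses $B(E,2r)$ altogether); then the closed sub-arc from any point strictly past $q$ up to $\ell$ is a loose branch of positive $\clH^1$-measure avoiding $B(E,2r)$, contradicting (P1). The same unjustified hypothesis --- that the limbs met by your traversal stay outside $B(E,6r)$, so that (P2) caps their length --- also underlies your claim that the next branch point is always reached before total length $8r$; you flag this at the end (``one must take care that the greedy traversal \dots nor strays within $6r$ of $E$'') but do not close it. It can be closed by observing that the paper's excision-and-reconnection proof of (P2) (remove the arc, discard a component that misses $B(E,2r)$, or reconnect through $B(E,2r)$ via the chain lemma by a segment of length at most $4r$) applies verbatim to \emph{every} branch-point-free sub-arc of $\Gamma^*$ lying outside $B(E,6r)$, not just to whole limbs. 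Since every point of $B(z,Ar)$ is at distance at least $2Ar-Ar=Ar\ge 64r$ from $E$, a branch-point-free sub-arc of length at most $4r$ starting there can never reach $B(E,6r)$; hence every forward direction meets a branch point within arc-length $4r$, and your induction on generations goes through. In fairness, the paper's own write-up leaves both of these points implicit.
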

\begin{proof}
   We begin by finding a binary sub-tree $T'\subset T$ where each limb $\eta\subset T'$ satisfies $4r< \clH^1(\eta)< 8r$. 

   Recall that $z$ is the root of $\Gamma^*$.   By (P1) together with (P2), $z$ is guaranteed to have at least two children.   
   We initialize $v=z$ to be a \textit{base point}.
   
    \textit{Base Point Step:} For base point $v$, we choose two children of $v$, which we label as $c_1$ and $c_2$.  We keep $T_{v,c_1}$ and $T_{v,c_2}$ but erase the descendant paths of all other children of $v$.  We then continue to the Child Step.

   \textit{Child Step:} For each $c_i$, $i\in\{1,2\}$, associated to $v$, we present two alternatives that dictate how to proceed.
    
    \indent \textit{Child Step Alternative 1:} When $\mathcal{H}^1(T_{v,c_i})\leq 4r$ the limb $T_{v,c_i}$ is not sufficiently long.  Choose one child of $c_i$, which we denote by $c_i'$, and erase the descendant paths of all other children of $c_i$. Then reset $c_i$ to be $c_i'$ and return to the \textit{Child Step}. 
    
    \indent  \textit{Child Step Alternative 2:} When $\mathcal{H}^1(T_{v,c_i})> 4r$ the limb $T_{v,{c_i}}$ is sufficiently long. Declare $c_i$ to be the base point $v$ and proceed to the \textit{Base Point Step}.

    We iterate this process until we do not have sufficiently many children available to complete the step.  
    We call the resulting sub-tree $T'$. The algorithm guarantees that each limb in $T'$ has $\clH^1$ measure between $4r$ and $8r$, as desired.

    Due to the above discussion (along with Lemma \ref{lem: pruning}), we guarantee that every unique path in $T'$ from $z$ will hit the boundary of the ball and each unique path will encounter at least $A/8$ branch points. This, in turn, implies that our binary tree has at least $2^{A/8}$ branch points. This fact, together with the length lower bound of each of the limbs, provides $\mathcal{H}^1(T')\geq 2^{A/8}4r.$
    \end{proof}

We are to ready to wrap up the proof of Theorem \ref{thm: Hausdorff conv H1}.
    
  We place a ball $B$ of radius $Ar$ centered at the root $z$. At the same time, we remove ${\Gamma^\ast} \cap B^\circ$, where $B^\circ$ denotes the interior of $B$. We notice that $ \Gamma':= ({\Gamma^*}\setminus B^\circ)\cup \partial B$ is still connected and that $B(E,r)\subset B(\Gamma',r)$. To complete the proof, it suffices to show that   $\mathcal{H}^1(\Gamma')< \mathcal{H}^1({\Gamma^\ast})$, thus providing a contradiction to $\Gamma^*$ being a minimizer.

    For that purpose, we apply Lemma \ref{lem: long substree}, obtaining  that $\clH^1({\Gamma^\ast} \cap B^\circ)\geq2^{A/8}4r$.  Thus we must simply choose $A$ big enough so that 
    $$2^{A/8}4r > 2\pi Ar.$$  
   Consequently, the theorem follows with $C=2A$.

\begin{remark}
    The authors believe that the arguments above that allowed us to prove Theorem \ref{thm: Hausdorff conv H1} would adapt without substantial modifications to higher dimension settings.
\end{remark}
\begin{remark}\label{rem:Convergence of minimizers B(E,r) to E}
    The arguments used throughout Section \ref{sec:Convergence of minimizers} can be adapted without difficulty to prove an analogous version of Theorem \ref{thm: Hausdorff conv H1} for $E$ (instead of $B(E,r)$) under the additional assumption that $E$ is not contained in a rectifiable curve.
\end{remark}
\begin{remark}
    It would be interesting to obtain an analogous result for $r$-maximum distance minimizers, replacing the $\mathcal{H}^1$ measure with the length of a curve $\ell$. The authors plan to address this problem in future publications.
\end{remark}

\appendix
\section{Details from Lemma \ref{lem: upper bound}}\label{sec:Variation}

\begin{lem} \label{lem: appendix}
There exist $C(n)$ circles of radius $\frac{r}{2}$, $\{C_i\}_{i=1}^{C(n)}$ in $\bbR^n$, such that if $x\in B\Big(0,\frac{11}{10}r\Big)$ then $x\in B(C_j,r)$ for some $j$.  
\end{lem}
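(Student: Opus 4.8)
The plan is to rescale to unit radius and then combine a one-line distance computation for a single circle with a compactness argument. Since the hypothesis and conclusion are invariant under the dilation $x\mapsto x/r$, it suffices to treat $r=1$: I would cover the closed ball $\overline{B(0,\tfrac{11}{10})}$ by the $1$-neighborhoods of finitely many circles of radius $\tfrac12$, with the number of circles depending only on $n$. Each circle $C_i$ will be taken centered at the origin and lying in a $2$-plane $P_i$ through the origin, so that $B(C_i,1)=\{x:\dist(x,C_i)\le 1\}$.

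The one elementary fact I need is the distance to such a circle. For a $2$-plane $P$ through the origin and $C_P:=\{p\in P:|p|=\tfrac12\}$, and for any $x\in P$, the nearest point of $C_P$ is radial inside $P$, so $\dist(x,C_P)=\big||x|-\tfrac12\big|$. Consequently, if I choose the plane to contain the point $x_0$ in question, then $\dist(x_0,C_P)=\big||x_0|-\tfrac12\big|\le \tfrac35<1$, the bound following from $|x_0|\le \tfrac{11}{10}<\tfrac32$. This slack between the target radius $\tfrac{11}{10}$ and the critical radius $\tfrac32$ (where $\big||x_0|-\tfrac12\big|=1$) is the only place the constant $\tfrac{11}{10}$ enters, and it is precisely what makes the statement true.

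The finiteness then comes from compactness. For each $x_0\in\overline{B(0,\tfrac{11}{10})}$ I would pick a $2$-plane $P_{x_0}\ni x_0$ (any plane through the origin if $x_0=0$) and let $C_{x_0}$ be the radius-$\tfrac12$ circle it carries. Since $x\mapsto\dist(x,C_{x_0})$ is $1$-Lipschitz and takes a value $\le\tfrac35<1$ at $x_0$, there is an open ball $U_{x_0}\ni x_0$ on which $\dist(\,\cdot\,,C_{x_0})<1$. The sets $\{U_{x_0}\}$ form an open cover of the compact set $\overline{B(0,\tfrac{11}{10})}$; extracting a finite subcover $U_{x_1},\dots,U_{x_N}$ and setting $C(n):=N$, every $x$ in the ball lies in some $U_{x_i}$ and hence in $B(C_{x_i},1)$. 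Undoing the rescaling returns the stated conclusion, and since the rescaled problem is identical for every $r$, the integer $N$ may be taken to depend on $n$ alone.

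\emph{Main obstacle.} There is no genuine analytic difficulty here; the only thing to get right is the geometry, namely the radial distance formula $\dist(x,C_P)=\big||x|-\tfrac12\big|$ for $x\in P$ and the verification that $\tfrac{11}{10}$ stays below the critical value $\tfrac32$. The sole drawback of the compactness route is that it yields no explicit value of $C(n)$. If an explicit bound were desired, one would instead exhibit a finite family of planes $\{P_i\}$ such that every unit vector $u$ has orthogonal projection onto some $P_i$ of norm at least $\tfrac{23}{55}$ (equivalently, every direction lies within the fixed angle $\arccos\tfrac{23}{55}$ of some $P_i$); the computation in the previous paragraph shows this projection condition is exactly what guarantees coverage of the whole radial segment out to radius $\tfrac{11}{10}$, and such a family exists because $\tfrac{23}{55}<1$. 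For the upper-bound argument of Lemma~\ref{lem: upper bound}, however, the qualitative existence established above is all that is required.
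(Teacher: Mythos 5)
Your proof is correct, and it reaches the conclusion by a genuinely different mechanism than the paper, even though both share the same geometric kernel: a circle of radius $\tfrac{r}{2}$ centered at the origin in a $2$-plane through the origin that (nearly) contains $x$ lies within distance about $\bigl||x|-\tfrac{r}{2}\bigr|\le\tfrac{3}{5}r<r$ of $x$. The paper obtains finiteness by an explicit angular discretization: it writes $x$ in spherical coordinates $(\rho,\phi_1,\dots,\phi_{n-1})$, fixes a finite net of angles $(\widetilde\phi_2,\dots,\widetilde\phi_{n-1})$, uses for each net point the plane spanned by the first coordinate axis and the corresponding unit direction (keeping $\phi_1$ exact), and estimates $|x-p|=\bigl|\tfrac{r}{2}-\rho\bigr|+r\cdot O(\varepsilon)$ in coordinates, choosing $\varepsilon=\varepsilon(n)$ small. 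You instead use the exact radial distance formula in the plane actually containing $x$, upgrade the pointwise bound $\tfrac{3}{5}<1$ to an open neighborhood via $1$-Lipschitzness of $\dist(\cdot,C)$, and invoke Heine--Borel on the closed ball, then rescale. What each buys: your route is cleaner and fully rigorous without the paper's informal $O(\varepsilon)$ bookkeeping in spherical coordinates, but it is non-constructive in $C(n)$; the paper's angular net is, at least in principle, quantitative (on the order of $\varepsilon^{-(n-2)}$ planes). Your closing remark --- choosing a finite family of planes onto which every unit vector projects with norm at least $\tfrac{23}{55}$ --- is essentially the paper's construction made precise, and the threshold is exactly right: for $|x|=\rho$ and $|\pi_P(x)|=c\rho$ one computes $\dist(x,C_P)^2=\rho^2-c\rho r+\tfrac{r^2}{4}$, which at $\rho=\tfrac{11}{10}r$ is at most $r^2$ precisely when $c\ge\tfrac{23}{55}$. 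Finally, note that your circles, like the paper's, are all centered at the origin, so the family translates to the points $\gamma(t_i)$ exactly as needed in the application in Lemma \ref{lem: upper bound}; the additional chaining condition stated there is outside the scope of this lemma in either approach.
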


\begin{proof}  
Fix $x\in B(0,\frac{11}{10}r)$, so 
\[x = (\rho\cos(\phi_1), \rho\sin(\phi_1)\cos(\phi_2),\dots, \rho\sin(\phi_1)\dots\cos(\phi_{n-1}), \rho\sin(\phi_1)\dots\sin(\phi_{n-1})),\]
with $0\leq \rho\leq \frac{11}{10}r$.
Suppose that the angles $\{\widetilde\phi_j\}_{j=2}^{n-1}$ are picked sufficiently close to $\phi_j$ so that $$0\le |\sin(\phi_j)-\sin(\widetilde{\phi}
_j)|<\epsilon \text{ and } 0\le |\cos(\phi_j)-\cos(\widetilde\phi_j)|<\epsilon,$$ for every $2 \leq j \leq n-1$.  Then choose
the plane
\[
P(t, \phi)=( t\cos(\phi), t\sin(\phi)\cos(\widetilde\phi_2),\dots, t\sin(\phi)\dots\cos(\widetilde\phi_{n-1}), t\sin(\phi)\dots\sin(\widetilde\phi_{n-1})), \]
with $0\le\phi\le 2\pi, t>0$, and the point $p\in P$ with coordinates
\[p=\Big(\frac{r}{2}\cos(\phi_1), \frac{r}{2}\sin(\phi_1)\cos(\widetilde\phi_2),\dots, \frac{r}{2}\sin(\phi_1)\dots\cos(\widetilde\phi_{n-1}), \frac{r}{2}\sin(\phi_1)\dots\sin(\widetilde\phi_{n-1})\Big).\]
We note that $p$ lives inside the circle of radius $r/2$ within $P$. Then 
$$|x-p|=\Big|\frac{r}{2}-\rho\Big| + r \cdot O(\varepsilon).$$
Finally, we notice that if we choose $\epsilon$ small enough depending on $n$  (i.e. we choose enough planes), we achieve the desired result. \end{proof}

\bibliographystyle{plain}
\bibliography{bibliography.bib}

\end{document}